\newtheorem{thm}{Theorem}[section]
\newtheorem{lem}[thm]{Lemma}
\newtheorem{prop}[thm]{Proposition}
\newtheorem*{claim}{Claim}
\theoremstyle{remark}
\newtheorem{defn}[thm]{Definition}
\newtheorem{rem}[thm]{Remark}
\newtheorem{exam}[thm]{Example}
\numberwithin{equation}{section}
\newcommand{\norm}[1]{\lVert #1 \rVert^2}
\newcommand{\spin}{\ifmmode{\rm Spin}\else{${\rm spin}$\ }\fi}
\newcommand{\spinc}{\ifmmode{{\rm Spin}^c}\else{${\rm spin}^c$}\fi}
\newcommand{\spinct}{\mathfrak t}
\newcommand{\spincs}{\mathfrak s}
\newcommand{\Z}{\mathbb{Z}}
\newcommand{\Q}{\mathbb{Q}}
\DeclareMathOperator*{\Char}{Char}
\DeclareMathOperator*{\PD}{PD}
\DeclareMathOperator*{\coker}{coker}
\begin{document}

\title[Sharp 4-manifolds and the Alexander polynomial]{Surgeries, sharp 4-manifolds and the Alexander polynomial}%
\author{Duncan McCoy}%
\date{}%


\begin{abstract}
Work of Ni and Zhang has shown that for the torus knot $T_{r,s}$ with $r>s>1$ every surgery slope $p/q \geq \frac{30}{67}(r^2-1)(s^2-1)$ is a characterizing slope. In this paper, we show that this can be lowered to a bound which is linear in $rs$, namely, $p/q\geq \frac{43}{4}(rs-r-s)$. The main technical ingredient in this improvement is to show that if $Y$ is an $L$-space bounding a sharp 4-manifold which is obtained by $p/q$-surgery on a knot $K$ in $S^3$ and $p/q$ exceeds $4g(K)+4$, then the Alexander polynomial of $K$ is uniquely determined by $Y$ and $p/q$. We also show that if $p/q$-surgery on $K$ bounds a sharp 4-manifold, then $S^3_{p'/q'}(K)$ bounds a sharp 4-manifold for all $p'/q'\geq p/q$.
\end{abstract}
\maketitle

\section{Introduction}
A knot $K\subseteq S^3$ is said to be an $L$-space knot if there is $p/q\in \mathbb{Q}$ such that the 3-manifold $S_{p/q}^3(K)$ obtained by $p/q$-surgery on $K$ is an $L$-space. It is known that the Heegaard Floer homology of an $L$-space obtained in this way is determined by the Alexander polynomial of $K$ and the surgery slope. We show that under certain circumstances, the Alexander polynomial is determined by the surgery coefficient and the resulting manifold.  A sharp 4-manifold is one whose intersection form determines, in a sense made precise later, the Heegaard Floer homology $d$-invariants of its boundary. Throughout this paper, we will always assume that the intersection form of a sharp 4-manifold is negative-definite.
\begin{thm}\label{thm:Alexuniqueness}
Suppose that for some $p/q>0$, there are knots $K,K'\in S^3$ such that $S^3_{p/q}(K)=S^3_{p/q}(K')$ is an $L$-space bounding a sharp 4-manifold. If $p/q\geq 4g(K)+4$, then
\[\Delta_K(t)=\Delta_{K'}(t) \text{ and } g(K)=g(K').\]
\end{thm}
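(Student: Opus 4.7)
The approach combines three Heegaard Floer ingredients: the Ni--Wu formula for the $d$-invariants of $L$-space surgeries, the identification $V_i(K) = t_i(K)$ of the knot Floer invariants $V_i$ with the torsion coefficients of the Alexander polynomial for $L$-space knots, and the lattice-theoretic description of $d$-invariants coming from a bounding sharp 4-manifold. The overall goal is to show that the sequence $(V_i(K))_{i\geq 0}$ is determined by $Y$ and $p/q$ alone; applied symmetrically to $K$ and $K'$, this will force equality of their Alexander polynomials and their genera.

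Both $K$ and $K'$ are $L$-space knots, so in the canonical enumeration of $\spinc(Y)$ coming from each surgery description the Ni--Wu formula reads
\[ d(Y, i) \;=\; d(L(p,q), i) \;-\; 2\, V_{h(i)}(K), \]
where $h\colon \{0,\ldots,p-1\} \to \mathbb{Z}_{\geq 0}$ depends only on $p/q$. By the $L$-space knot property, the sequence $(V_i(K))$ is nonincreasing, nonnegative, finitely supported on $\{0,\ldots,g(K)-1\}$, and determines $\Delta_K$ completely.

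I would next invoke the sharp 4-manifold hypothesis: the intersection form of the bounding $X$ encodes the $d$-invariants of $Y$ via the characteristic vector maximum formula, giving a preferred enumeration of $\spinc(Y)$ intrinsic to $Y$. Combined with the Ni--Wu expression above, this lets one recover the values $V_{h(i)}(K)$ from $(Y,p/q)$. The bound $p/q \geq 4g(K)+4$ is used to guarantee that the reindexing $h$ hits every integer $0,1,\ldots,g(K)$ often enough to pin down each $V_j(K)$, and moreover enough times to force $V_j(K') = 0$ for $j > g(K)$, hence $g(K') \leq g(K)$. Once $V_i(K) = V_i(K')$ is known for all $i$, the $L$-space knot identification $V_i = t_i$ delivers $\Delta_K = \Delta_{K'}$, and equality $g(K)=g(K')$ then follows because the Seifert genus of an $L$-space knot is the degree of its Alexander polynomial.

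The hardest part is reconciling the two canonical enumerations of $\spinc(Y)$ coming from the surgeries on $K$ and $K'$: a priori these enumerations could differ by a nontrivial permutation, so the extracted $V$-sequences are not directly comparable. I expect this to be handled via the changemaker-style lattice embedding obtained by gluing $-X$ to the surgery cobordism and invoking Donaldson's theorem on the resulting closed negative-definite 4-manifold; this embedding should be rigid enough that the two enumerations agree after a reindexing that preserves the $V$-sequence, completing the proof.
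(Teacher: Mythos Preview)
Your overall framework matches the paper's: both arguments run through the changemaker lattice embedding obtained by gluing $-X$ to the surgery trace and applying Donaldson's theorem, both use the identification $V_i=t_i$ for $L$-space knots, and both identify the central difficulty as reconciling the two $\spinc$-labelings of $Y$ arising from the surgeries on $K$ and $K'$. So the direction you point in at the end is exactly right.

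The gap is in your account of where the bound $p/q\geq 4g(K)+4$ enters. You say it guarantees that the reindexing $h$ ``hits every integer $0,1,\ldots,g(K)$ often enough to pin down each $V_j(K)$''. That is not the issue: already $p/q>2g(K)-1$ makes every $V_j$ with $0\leq j\leq g(K)$ appear in the Ni--Wu formula, and no amount of ``hitting often enough'' resolves the labeling ambiguity you yourself flag as the hardest part. In the paper the bound is used instead in a rigidity statement for changemaker lattices (Lemma~\ref{lem:CMuniqueness}): if a lattice $L$ with stable coefficients $(\rho_1,\ldots,\rho_t)$ admits two embeddings into $\mathbb{Z}^N$ as a $p/q$-changemaker lattice and $p/q\geq \sum_i\rho_i^2+2\rho_t$, then the two changemaker vectors $w_0,w_0'$ are related by an automorphism of $\mathbb{Z}^N$. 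The specific threshold $4g(K)+4$ then comes from Greene's genus formula $2g(K)=\sum_i\rho_i(\rho_i-1)$ via the elementary estimate $\sum_i\rho_i^2+2\rho_t\leq 4g(K)+4$. Once $w_0$ and $w_0'$ agree up to automorphism, \eqref{eq:tiformula} immediately gives $t_i(K)=t_i(K')$ for all $|i|\leq n/2$, and the rest follows. The rigidity lemma is the substantive content of the proof---one builds explicit vectors $v_k=-e_k+e_{k-1}$ and $v_{m+k}=-e_{m+k}+e_m+\cdots+e_{m-\rho_k+1}$ inside $L$ and constrains their images under an arbitrary changemaker embedding---and it genuinely fails without the hypothesis on $p/q$ (the paper exhibits two inequivalent $21$-changemaker structures on the same lattice, coming from $S^3_{21}(T_{5,4})=S^3_{21}(T_{11,2})$). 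Your ``this embedding should be rigid enough'' does not supply this, and your proposed role for $4g(K)+4$ would not explain why the argument breaks down below that threshold.
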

The most obvious limitation of Theorem~\ref{thm:Alexuniqueness} is the required existence of a sharp 4-manifold. It turns out that given one sharp 4-manifold bounding $S^3_{p/q}(K)$, we can construct one bounding $S^3_{p'/q'}(K)$ for any $p'/q'\geq p/q$.
\begin{thm}\label{thm:sharpextension}
If $S^3_{p/q}(K)$ bounds a sharp 4-manifold for some $p/q>0$, then $S^3_{p'/q'}(K)$ bounds a sharp 4-manifold for all $p'/q'\geq p/q$.
\end{thm}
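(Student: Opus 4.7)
The plan is to build the desired sharp 4-manifold for $S^3_{p'/q'}(K)$ from the given sharp $W$ by attaching a negative-definite 2-handle cobordism. The basic move, via the slam-dunk, is that attaching a $(-n)$-framed 2-handle along a meridian $\mu$ of $K$ (viewed as a curve in $\partial W = S^3_{p/q}(K)$) produces $S^3_{p/q + 1/n}(K)$ on the new boundary. By iterating such attachments with negative framings chosen according to a continued-fraction expansion of the slope change, we can realize any target slope $p'/q' \geq p/q$. Let $X$ denote the resulting 2-handle cobordism from $S^3_{p/q}(K)$ to $S^3_{p'/q'}(K)$ and set $W' = W \cup X$.

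The first step is to verify that $W'$ is negative-definite. Each 2-handle contributes a new $H_2$-class represented by the core disk glued to a 2-chain in $W$ bounding the appropriate multiple of the attaching meridian, and its self-intersection is controlled by the (negative) 2-handle framing together with a correction coming from the linking form of $\partial W$. A judicious choice of framings in the continued-fraction expansion makes the new diagonal entries negative and the interactions with the old $H_2$-classes compatible with negative-definiteness.

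The main step, and the principal obstacle, is verifying that $W'$ is sharp. For each $\spincs' \in \spinc(S^3_{p'/q'}(K))$ one must exhibit a $\spinc$ structure $\spinct'$ on $W'$ restricting to $\spincs'$ with $c_1(\spinct')^2 + b_2(W') = 4 d(S^3_{p'/q'}(K), \spincs')$. Decomposing $\spinct'$ into its restrictions to $W$ and $X$, and invoking the sharpness of $W$, this reduces to the following cobordism-level statement: for each $\spincs'$ there exist compatible $\spincs \in \spinc(\partial W)$ and $\spinct_X \in \spinc(X)$ with
\[
d(S^3_{p'/q'}(K), \spincs') - d(S^3_{p/q}(K), \spincs) = \frac{c_1(\spinct_X)^2 + b_2(X)}{4}.
\]
In other words, the cobordism $X$ must itself be \emph{sharp}, independently of the choice of $W$. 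I would establish this by applying the Ozsv\'ath--Szab\'o mapping cone formula to express the $d$-invariants on each side in terms of torsion coefficients of $K$, and then carefully tracking how $\spinc$ structures on $S^3_{p/q}(K)$ and $S^3_{p'/q'}(K)$ are identified across the slam-dunk. The crux is this bookkeeping of $\spinc$ structures through an iterated meridional 2-handle cobordism, and the verification that the mapping cone output matches the maximum value of $(c_1(\spinct_X)^2 + b_2(X))/4$ over $\spinc$ structures on $X$.
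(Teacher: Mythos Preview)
Your strategy coincides with the paper's: glue the Owens--Strle negative-definite 2-handle cobordism $X$ to the given sharp $W$, reduce sharpness of $W\cup X$ to the cobordism-level identity you wrote, and verify that identity by writing both $d$-invariants as $d(\text{lens space})-2V_j$ via the surgery formula and then matching the indices $j$. The paper carries this out through explicit continued-fraction bookkeeping of $\spinc$-structures (the sets $\mathcal{C}$, $\mathcal{F}$, Lemma~\ref{lem:evaldi}, and Lemma~\ref{lem:spinextension}), which is exactly the ``bookkeeping of $\spinc$ structures through an iterated meridional 2-handle cobordism'' you flag as the crux.

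There is, however, one point where your sketch asserts something false and a genuine idea is missing. You claim $X$ is sharp ``independently of the choice of $W$,'' i.e.\ that the equality $d(Y',\spincs')-d(Y,\spincs)=\tfrac{1}{4}(c_1^2+b_2)$ can always be achieved over $X$ alone. It cannot. When the larger slope has ceiling an even integer $2n$, there is a $\spinc$ structure on that end whose $D$-value is $-2V_n$; but once the smaller end drops to slope $2n-1$, only the values $-2V_0,\dots,-2V_{n-1}$ occur among all its $\spinc$ structures. No extension over $X$ can then produce equality unless $V_n=V_{n-1}$. The paper isolates this as a separate step (Lemma~\ref{lem:specialcase}): Greene's genus bound for surgeries bounding sharp $4$-manifolds, applied using the sharpness hypothesis you already have in hand, forces $V_{n-1}=V_n=0$. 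So the sharpness of the original filling is used a second time, not merely as the base of the construction, and you should expect this edge case to surface when you actually carry out the $\spinc$ matching you describe.
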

Owens and Strle show that if $S^3_{p/q}(K)$ bounds a negative-definite 4-manifold $X$, then $S^3_{p'/q'}(K)$ bounds a negative-definite 4-manifold for any $p'/q'\geq p/q$ by taking a certain negative-definite cobordism from $S^3_{p/q}(K)$ to $S^3_{p'/q'}(K)$ and gluing it to $X$ \cite{Owens2012negdef}. We prove Theorem~\ref{thm:sharpextension} by showing that if $X$ is sharp, then this construction results in a sharp 4-manifold.

We apply these results to the problem of finding characterizing slopes of torus knots. We say that $p/q$ is a {\em characterizing slope} for $K$ if $S^3_{p/q}(K)=S^3_{p/q}(K')$ implies that $K=K'$. Using a combination of Heegaard Floer homology and geometric techniques, Ni and Zhang were able to prove the following theorem.
\begin{thm}[Ni and Zhang, \cite{Ni2014characterizing}]\label{thm:NiZhang}
For the torus knot $T_{r,s}$ with $r>s>1$ any non-trivial slope $p/q$ satisfying
\[\frac{p}{q} \geq \frac{30(r^2-1)(s^2-1)}{67}\]
is a characterizing slope.
\end{thm}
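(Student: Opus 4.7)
The plan is to leverage Theorems~\ref{thm:Alexuniqueness} and~\ref{thm:sharpextension} to reduce the statement to the Alexander polynomial level, and then finish by invoking the geometric machinery already present in Ni and Zhang's work. Let $K'$ be a knot with $S^3_{p/q}(K') = S^3_{p/q}(T_{r,s})$ for some non-trivial $p/q$ satisfying the hypothesis; the goal is to deduce $K' = T_{r,s}$.

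First I would verify that $S^3_{p/q}(T_{r,s})$ is an $L$-space bounding a sharp 4-manifold. The $L$-space condition is immediate because $T_{r,s}$ is an $L$-space knot and $p/q > 0$. For sharpness, a sufficiently large positive integer surgery on $T_{r,s}$ is a Seifert fibered $L$-space bounding its standard negative-definite star-shaped plumbing, which is known to be sharp. Theorem~\ref{thm:sharpextension} then propagates sharpness to every $p/q$ above that threshold, so in particular to the slopes in the hypothesis.

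Next, I would check the routine numerical inequality
\[\frac{30(r^2-1)(s^2-1)}{67} \;\geq\; 4g(T_{r,s}) + 4 \;=\; 2(r-1)(s-1) + 4\]
valid for all $r > s > 1$. Theorem~\ref{thm:Alexuniqueness} then yields $\Delta_{K'}(t) = \Delta_{T_{r,s}}(t)$ and $g(K') = g(T_{r,s}) = (r-1)(s-1)/2$. Since $p/q > 0$ and $S^3_{p/q}(K')$ is an $L$-space, $K'$ is itself an $L$-space knot, and hence fibered with monic Alexander polynomial of the torus-knot shape.

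The main obstacle is the final step: upgrading the matching Alexander polynomial, genus and $L$-space property to the actual identification $K' = T_{r,s}$, since two distinct fibered knots can share an Alexander polynomial. Here I would invoke geometric input analogous to Ni and Zhang's original argument: for generic $p/q$ in the stated range, $S^3_{p/q}(T_{r,s})$ is Seifert fibered, and applying the classification of exceptional Dehn fillings together with the rigidity of the JSJ decomposition of $S^3 \setminus K'$ should force the complement of $K'$ to carry a Seifert structure compatible with that of $S^3 \setminus T_{r,s}$; combined with the Alexander data (which pins down the pair $(r,s)$), this gives $K' = T_{r,s}$. The contribution of the present paper in such a strategy is to replace the Heegaard Floer portion of Ni--Zhang's argument with the cleaner package of Theorems~\ref{thm:Alexuniqueness} and~\ref{thm:sharpextension}, opening the door to a substantially smaller numerical bound.
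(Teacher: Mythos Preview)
The statement you are addressing, Theorem~\ref{thm:NiZhang}, is \emph{not} proved in this paper: it is quoted verbatim as a prior result of Ni and Zhang and serves only as motivation for the paper's own improvement, Theorem~\ref{thm:charslopes}. There is therefore no ``paper's own proof'' of Theorem~\ref{thm:NiZhang} to compare your attempt against.

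What you have sketched is, in substance, the paper's argument for Theorem~\ref{thm:charslopes}. The first half matches exactly: Lemma~\ref{lem:torusgenus} packages precisely your use of Theorems~\ref{thm:Alexuniqueness} and~\ref{thm:sharpextension} (sharpness propagated up from the lens space $S^3_{rs-1}(T_{r,s})$, then the Alexander polynomial forced to agree once $p/q\geq 4g(T_{r,s})+4$). Where your outline is vague, however, the paper is concrete. Instead of appealing loosely to ``classification of exceptional Dehn fillings together with the rigidity of the JSJ decomposition'' to force a Seifert structure on the complement, the paper runs the geometrization trichotomy directly on $K'$: Proposition~\ref{prop:hyperbolicbound} (the Agol--Lackenby and Cao--Meyerhoff bound, as formulated by Ni--Zhang) excludes $K'$ hyperbolic once $p\geq 10.75(2g(K')-1)$; Lemma~\ref{lem:satellitebound} excludes $K'$ a satellite via Gabai's solid-torus surgery theorem and a comparison of Seifert invariants; hence $K'$ is a torus knot, and the already-matched Alexander polynomial then identifies it as $T_{r,s}$. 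Your final paragraph in fact diagnoses this correctly: the role of Theorems~\ref{thm:Alexuniqueness} and~\ref{thm:sharpextension} here is to replace the Heegaard Floer portion of Ni--Zhang's original argument and thereby \emph{lower} the bound to that of Theorem~\ref{thm:charslopes}, not to reprove Theorem~\ref{thm:NiZhang}.
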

Their argument requires a bound on the genus of any knot $K$ satisfying $S^3_{p/q}(K)=S^3_{p/q}(T_{r,s})$. Since $S^3_{p/q}(T_{r,s})$ is an $L$-space bounding a sharp 4-manifold for $p/q\geq rs-1$, we can apply Theorem~\ref{thm:Alexuniqueness} to obtain the equality $g(K)=g(T_{r,s})$, whenever $p/q\geq 4g(K)+4$. This allows us to lower their quadratic bound to one which is linear in $rs$.
\begin{thm}\label{thm:charslopes}
For the torus knot $T_{r,s}$ with $r>s>1$ any non-trivial slope $p/q$ satisfying
\[\frac{p}{q} \geq 10.75(2g(T_{r,s})-1)=\frac{43}{4}(rs-r-s).\]
is a characterizing slope.
\end{thm}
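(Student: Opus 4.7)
The plan is to reuse the framework of Ni and Zhang's proof of Theorem~\ref{thm:NiZhang} while replacing their Heegaard-Floer genus bound on $K$ --- the source of the quadratic constant in their threshold --- by the exact equality $g(K)=g(T_{r,s})$ supplied by Theorem~\ref{thm:Alexuniqueness}.

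Let $K\subset S^3$ satisfy $S^3_{p/q}(K)=S^3_{p/q}(T_{r,s})$ with $p/q\geq \tfrac{43}{4}(rs-r-s)$ non-trivial. The first task is to verify that Theorem~\ref{thm:Alexuniqueness} applies. For $p/q\geq 2g(T_{r,s})-1 = rs-r-s$ the manifold $S^3_{p/q}(T_{r,s})$ is an $L$-space, and for $p/q\geq rs-1$ it bounds a sharp negative-definite plumbing coming from the Seifert fibration of $T_{r,s}$; Theorem~\ref{thm:sharpextension} handles any intermediate slopes if needed. Using $2g(T_{r,s})=(r-1)(s-1)$, a short arithmetic check shows that whenever $r>s>1$ one has $\tfrac{43}{4}(rs-r-s)\geq\max\{rs-1,\,4g(T_{r,s})+4\}$. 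Any knot $K$ sharing a positive $L$-space surgery with $T_{r,s}$ is itself an $L$-space knot, and its genus admits an a priori Heegaard-Floer upper bound in terms of the $d$-invariants of the common surgery --- enough to also secure $p/q\geq 4g(K)+4$. Theorem~\ref{thm:Alexuniqueness} then yields
\[
\Delta_K(t)=\Delta_{T_{r,s}}(t) \quad \text{and} \quad g(K)=g(T_{r,s}).
\]

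With the genus of $K$ pinned down, I would feed these equalities back into Ni and Zhang's proof. As the introduction observes, once a genus bound is available their argument proceeds: the analysis of the JSJ decomposition of $S^3\setminus K$, cabling and Seifert-fibered obstructions, and exceptional-slope bounds coming from Thurston's hyperbolic Dehn surgery theorem all run under a slope threshold that is linear in $rs$. Carefully tracking each of their slope inequalities under the sharpened hypothesis $g(K)=g(T_{r,s})$ produces the constant $\tfrac{43}{4}$.

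The main obstacle is this final bookkeeping: every slope condition in Ni-Zhang must be revisited in the regime $p/q\geq\tfrac{43}{4}(rs-r-s)$ to confirm that no residual quadratic dependence lingers, and the constant $\tfrac{43}{4}$ must be shown to be the smallest one compatible with their exceptional surgery estimates --- in particular the $6$-theorem-type bounds applied to the hyperbolic pieces of $S^3\setminus K$.
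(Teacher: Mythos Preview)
Your plan matches the paper's proof: apply Theorem~\ref{thm:Alexuniqueness} to get $g(K)=g(T_{r,s})$ and $\Delta_K=\Delta_{T_{r,s}}$, then run the Ni--Zhang trichotomy (hyperbolic / satellite / torus) with the genus now known exactly, the constant $10.75=\tfrac{43}{4}$ being precisely the Agol--Lackenby / Cao--Meyerhoff exceptional-slope bound (the paper's Proposition~\ref{prop:hyperbolicbound}).

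One unnecessary detour: you do \emph{not} need an a priori Heegaard--Floer genus bound on the unknown knot $K$ to invoke Theorem~\ref{thm:Alexuniqueness}. Read the hypothesis again: it asks only for $p/q\geq 4g(K)+4$ for \emph{one} of the two knots, and you are free to let that knot be $T_{r,s}$. So the arithmetic check $\tfrac{43}{4}(rs-r-s)\geq 4g(T_{r,s})+4$ already suffices; the sentence about bounding $g(K)$ via $d$-invariants before applying the theorem can be deleted. After the theorem fires you \emph{get} $g(K)=g(T_{r,s})$, and only then do you need it to feed into Proposition~\ref{prop:hyperbolicbound} and the satellite analysis.
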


\subsection*{Acknowledgements}
The author would like to thank his supervisor, Brendan Owens, for his helpful guidance. He would also like to acknowledge the influential role of ideas from Gibbons' paper \cite{Gibbons2013deficiency} in the proof of Theorem~\ref{thm:sharpextension}.

\section{Sharp 4-manifolds}\label{sec:sharp}
The aim of this section is to prove Theorem~\ref{thm:sharpextension}. Let $Y$ be a rational homology 3-sphere. Its Heegaard Floer homology is an abelian group which splits as a direct sum over its \spinc-structures:
\[\widehat{HF}(Y)\cong \bigoplus_{\spinct \in \spinc(Y)}\widehat{HF}(Y,\spinct).\]
Associated to each summand there is a numerical invariant $d(Y,\spinct)\in \mathbb{Q}$, called the {\em $d$-invariant} \cite{Ozsvath2003Absolutely_graded}. If $Y$ is the boundary of a smooth, negative-definite 4-manifold $X$, then for any $\spincs \in \spinc(X)$ which restricts to $\spinct \in \spinc(Y)$ there is a bound on the  $d$-invariant:
\begin{equation}\label{eq:sharpdef}
c_1(\spincs)^2+b_2(X)\leq 4d(Y,\spinct).
\end{equation}
We say that $X$ is {\em sharp} if for every $\spinct \in \spinc(Y)$ there is some $\spincs \in \spinc(X)$ which restricts to $\spinct$ and attains equality in \eqref{eq:sharpdef}. Throughout this paper, every sharp manifold is assumed to be negative-definite.

\subsection{A manifold bounding $S^3_{p/q}(K)$}
\begin{figure}[h]
  \begin{overpic}[width=200pt]{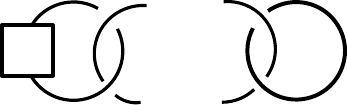}
    \put (15, -4) {$a_1$}
    \put (35, -4) {$a_2$}
    \put (65, -4) {$a_{l-1}$}
    \put (85, -4) {$a_l$}
     \put (5,13) {{\large $K$}}
      \put (50, 13) {{\LARGE $\dots$}}
  \end{overpic}
  \vspace{3mm}
  \caption{A Kirby diagram for $W(K)$ and a surgery diagram for $Y \cong S^3_{p/q}(K)$.}
   \label{fig:kirbydiagram}
\end{figure}

Let $K \subset S^3$ be a knot. For fixed $p/q>0$, with a continued fraction $p/q= [a_0, \dots, a_l]^-$, where
\[
[a_0, \dots, a_l]^-
= a_0 -
    \cfrac{1}{a_1
        - \cfrac{1}{\ddots
            - \cfrac{1}{a_l} } },
\]
and the $a_i$ satisfy
\[
a_i\geq
\begin{cases}
1 & \text{for } i = 0 \text{ or }l\\
2 & \text{for } 0<i <l,
\end{cases}
\]
one can construct a 4-manifold $W$ bounding $Y\cong S^3_{p/q}(K)$ by attaching 2-handles to $D^4$ according to the Kirby diagram given in Figure~\ref{fig:kirbydiagram}. In this paper, all homology and cohomology groups will be taken with integer coefficients. Since $W$ is constructed by attaching 2-handles to a 0-handle, the first homology group $H_1(W)$ is trivial and $H_2(W)$ is a free group with a basis $\{h_0, \dots , h_l\}$ given by the 2-handles. With respect to the basis given by the $h_i$, the intersection form $H_2(W) \times H_2(W) \rightarrow \Z$ is given by the matrix:
\[M =
  \begin{pmatrix}
   a_0  & -1   &        &       \\
   -1   & a_1  & -1     &       \\
        & -1   & \ddots & -1    \\
        &      & -1     & a_l
  \end{pmatrix}.\]
The intersection form of $W$ extends linearly to a $\Q$-valued pairing on  $H_2(W)\otimes \Q$. As $H_1(W)$ is trivial, we may identify $H^2(W)$ with ${\rm Hom}(H_2(W),\Z)$. This allows us to take a basis $\{h_0^*, \dots, h_l^*\}$ for $H^2(W)$, where $h_i^*$ is the function defined by $h^*_i(j_j)=\delta_{ij}$, where $\delta_{ij}$ is the Kronecker delta. Since the intersection pairing on $M$ is non-degenerate, we can identify ${\rm Hom}(H_2(W),\Z)$ with the set
\[
\{\alpha\in H_2(W)\otimes \Q \mid x\cdot \alpha \in \Z \,\text{for all}\, v \in H_2(W)\}\subseteq H_2(W)\otimes \Q,
\]
where the identification is that $\alpha\in H_2(W)\otimes \Q$ corresponds to the function given by $v \mapsto \alpha\cdot v$. Under this identification, the set dual basis element $h_i^*$ is identified with $M^{-1} h_i$. Thus by considering $H^2(W)={\rm Hom}(H_2(W),\Z)$ as a subset of $H_2(W)\otimes \Q$ we obtain a $\Q$-valued pairing on $H^2(W)$ which is expressed by $M^{-1}$ with respect to the basis given by the $h_i^*$.

For an element $\alpha \in H^2(W)$ we use $\norm{\alpha}$ to denote the  of $\alpha$ with respect to this pairing, that is
\[
\norm{\alpha}=\alpha \cdot \alpha= \alpha^T M^{-1} \alpha \in \Q,
\]
where for the last expression we think of $\alpha$ as a row vector written in terms of the basis given by the $h_i^*$.
By considering the long exact sequence of the pair $(W, Y)$, we obtain the short exact sequence:
\[0 \rightarrow H^2(W, Y)\rightarrow H^2(W) \rightarrow H^2(Y) \rightarrow 0.\]
Identifying $H^2(W, Y)$ with $H_2(W)$ via Poincar\'{e} duality gives an injective map $\PD \colon H_2(W)\rightarrow H^2(W)$. When written with respect to the bases for $H_2(W)$ and $H^2(W)$ given by the $h_i$ and the $h_i^*$, respectively, $\PD$ is the linear map corresponding to multiplication by $M$. In particular, the map $\PD$ agrees with the inclusions $H_2(W) \subseteq H^2(W) \subseteq  H_2(W)\otimes \Q$. Consequently, we have isomorphisms:
\begin{equation}\label{eq:H2YcongM}
H^2(Y) \cong \frac{H^2(W)}{\PD(H_2(W))}\cong \coker M.
\end{equation}

Since $H^2(W)$ is torsion-free, the first Chern class defines an injective map
\[c_1:\spinc(W) \rightarrow H^2(W),\]
where the image is the set of characteristic covectors $\Char(W) \subseteq H^2(W)$. A {\em characteristic covector} $\alpha\in H^2(W)$ is one satisfying
\[\alpha \cdot x \equiv x\cdot x \bmod 2, \text{ for all } x\in H_2(W).\]
Since the $h_i$ satisfy $h_i \cdot h_i =a_i$, this allows us to identify $\spinc(W)$ with the set
\[\Char (W) = \{ (c_0, \dots , c_l)\in \Z^{l+1} \,|\, c_i \equiv a_i \bmod 2 \text{ for all } 0\leq i \leq l\}.\]
We will use this identification throughout this section.

Using the map in \eqref{eq:H2YcongM}, which arises from restriction, this allows us to identify the set $\spinc(Y)$ with elements of the quotient
\[\frac{\Char(W)}{2\PD(H_2(W))}.\]
Given $\spincs \in \Char(W)$ we will use $[\spincs]$ to denote its equivalence class modulo $2\PD(H_2(W))$ and the corresponding \spinc-structure on $Y$.

\begin{defn}We say that $\spincs \in \Char(W)$ is {\em short} if it satisfies $\norm{\spincs}\leq \norm{\spincs'}$ for all $\spincs' \in \Char(W)$ with $[\spincs']=[\spincs]$.
\end{defn}
If we let $\spincs=(c_0, \dots, c_l)\in \Char(W)$, then the following calculation will be useful in finding short elements of $\Char(W)$.
\begin{align}\begin{split}\label{eq:normcalc}
\norm{\spincs \pm 2\PD(h_i)} &=(\spincs \pm 2\PD(h_i))M^{-1}(\spincs \pm 2\PD(h_i) )^T  \\
                            &=\norm{\spincs} \pm 4\PD(h_i)M^{-1} \spincs^T + 4\norm{\PD(h_i)}\\
                           &= \norm{\spincs} \pm 4c_i + 4a_i.
\end{split}\end{align}
Note that \eqref{eq:normcalc} immediately implies that $\spincs$ cannot be short unless $|c_i|\leq a_i$ for all $i$.

\subsection{Short representatives in $\Char(W)$}\label{sec:representatives}
Now we identify a set of representatives for $\spinc(S^3_{p/q}(K))$ in $\spinc(W)=\Char(W)$. Following Gibbons, we make the following definitions \cite{Gibbons2013deficiency}.
\begin{defn}\label{def:fulltank}
Given $\spincs =(c_0, \dots, c_l)\in \Char(W)$, we say that it contains a {\em full tank} if there are $0\leq i <j\leq l$, such that $c_i=a_i$, $c_j=a_j$ and $c_k=a_k-2$ for all $i<k<j$.
We say that $\spincs$ is {\em left-full}, if there is $k>0$, such that $c_k=a_k$ and $c_j=a_j-2$ for all $0< j<k$.
\end{defn}
Observe that our definition of left-full does not impose any conditions on $c_0$, and that if $l=0$, then $\Char(W)$ contains no left-full elements.
Let $\mathcal{M}$ denote the set of elements $\spincs =(c_0, \dots, c_l)\in \Char(W)$ satisfying
\[|c_i| \leq a_i, \text{ for all } 0\leq i \leq l\]
and such that neither $\spincs$ nor $-\spincs$ contain any full tanks. Let $\mathcal{C}\subseteq \mathcal{M}$ denote the set of elements $\spincs =(c_0, \dots, c_l)\in \mathcal{M}$ satisfying
\[2-a_i\leq c_i \leq a_i, \text{ for all } 0\leq i \leq l.\]
The set $\mathcal{C}$ will turn out to form a complete set of representatives for $\spinc(Y)$. First we make a careful count of the elements in $\mathcal{C}$.
\begin{lem}\label{lem:spinccount}
Write $p/q$ in the form $p/q=a_0-r/q$, where $q/r=[a_1,\dots, a_l]^-$. We have $|\mathcal{C}|=p$, and for each $c\equiv a_0 \bmod 2$, we have
\[|\{(c_0,\dots, c_l)\in \mathcal{C}\,|\, c_0=c\}|=
\begin{cases}
q   &\text{for } -a_0<c<a_0\\
q-r & c=a_0
\end{cases}
\]
and
\[|\{\spincs=(c_0,\dots, c_l)\in \mathcal{C}\,|\, c_0=c \text{ and $\spincs$ is left full}\}|=
\begin{cases}
r   &\text{for } -a_0<c<a_0\\
0   &\text{for } c=a_0.
\end{cases}
\]
\end{lem}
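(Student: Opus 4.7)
The plan is induction on $l$. The base case $l=0$ is direct: $p=a_0$, $q=1$, $r=0$, the set $\mathcal{C}$ consists of the $a_0$ values $c_0\equiv a_0\pmod 2$ lying in $[2-a_0,a_0]$, and no element is left-full since that definition requires an index $k>0$.

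For $l\geq 1$, write $q/r=[a_1,\dotsc,a_l]^-=a_1-r'/q'$ with $q'/r'=[a_2,\dotsc,a_l]^-$; the standard continued-fraction recursion gives $q'=r$. Let $\mathcal{C}'$ denote the analogue of $\mathcal{C}$ built from $a_1,\dotsc,a_l$, to which the inductive hypothesis applies with $q'$, $r'$ playing the roles of $q$, $r$. I would partition $\mathcal{C}$ by the value of $c_0$ and relate each part to data about $\mathcal{C}'$. The key observation is that a full tank in $\spincs$ or in $-\spincs$ involving position $0$ forces $c_0=\pm a_0$, so for each of the $a_0-1$ values of $c_0$ strictly between $-a_0$ and $a_0$ the tail $(c_1,\dotsc,c_l)$ may be any element of $\mathcal{C}'$, giving $q$ tuples by induction.

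To count left-full tuples with $c_0$ in the interior, I would parse the definition according to whether the witnessing index $k$ equals $1$ or is at least $2$: this shows $(c_0,\dotsc,c_l)$ is left-full iff $c_1=a_1$, or else $c_1=a_1-2$ together with $(c_1,\dotsc,c_l)$ being left-full in the sense of $\mathcal{C}'$. By induction these alternatives contribute $q'-r'$ and $r'$ tuples respectively, summing to $q'=r$. When $c_0=a_0$, the condition that $(a_0,c_1,\dotsc,c_l)$ contains no full tank beginning at position $0$ is precisely the negation of the same disjunction, so I would remove those $r$ bad tails from $\mathcal{C}'$ to leave $q-r$ admissible tuples; no full tank of $-\spincs$ can start at position $0$ since $-a_0\neq a_0$. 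The same identification forces the left-full count with $c_0=a_0$ to vanish. Summing over $c_0$ gives $|\mathcal{C}|=(a_0-1)q+(q-r)=a_0q-r=p$.

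The main obstacle is bookkeeping: identifying the two mutually exclusive ways a full tank beginning at position $0$ can appear, translating the left-full condition across the index shift from $(c_0,\dotsc,c_l)$ to $(c_1,\dotsc,c_l)$ so that the inductive hypothesis applies cleanly, and verifying the minor edge case $a_1=1$, which forces $l=1$ so that $\mathcal{C}'$ has no left-full elements and the second count vanishes trivially.
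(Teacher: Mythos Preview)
Your proposal is correct and follows essentially the same argument as the paper: induction on $l$, with $\mathcal{C}'$ the analogous set for $(a_1,\dotsc,a_l)$, the same characterization of left-fullness via the cases $c_1=a_1$ versus $c_1=a_1-2$ with the tail left-full, and the same observation that when $c_0=a_0$ the excluded tails are exactly the left-full ones. Your notation $q',r'$ (with $q'=r$) is just a relabelling of the paper's $r,r'$, and the edge case you flag is handled identically.
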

\begin{proof}
We prove this by induction on the length of the continued fraction $[a_0, \dots , a_l]^-$. When $l=0$, we have $p=a_0$, $q=1$ and $r=0$. In this case,
\[\mathcal{C}=\{-a_0<c\leq a_0 \,|\, c \equiv a_0 \bmod 2\}\]
which clearly has the required properties.

Now suppose that $l>0$. By attaching 2-handles to $B^4$ as in Figure~\ref{fig:kirbydiagram} according to the continued fraction $q/r=[a_1, \dots, a_l]^-$, we obtain a 4-manifold $W'$ whose intersection form is given by
\[M' =
  \begin{pmatrix}
   a_1  & -1   &        &       \\
   -1   & a_2  & -1     &       \\
        & -1   & \ddots & -1    \\
        &      & -1     & a_l
  \end{pmatrix}\]
with respect to the basis given by the 2-handles. The characteristic covectors of $W'$ can be identified with
\[\Char(W')=\{(c_1, \dots , c_l)\,|\, c_i \equiv a_i \bmod 2\}.\]
According to Definition~\ref{def:fulltank}, $(c_1,\dots,c_l) \in \Char(W')$ is left-full if there is $1<j\leq l$ with $c_j=a_j$ and $c_k=a_k-2$ for all $1<k\leq j-1$, and contains a full tank if there are $1\leq i <j\leq l$, such that $c_i=a_i$, $c_j=a_j$ and $c_k=a_k-2$ for all $i<k<j$.

Analogous to $\mathcal{C}\subseteq \Char(W)$, we take $\mathcal{C'}\subseteq \Char(W')$ to be those $(c_1, \dots, c_l)\in \Char(W')$ satisfying $-a_i<c_i\leq a_i$ for all $i$ and not containing any full tanks.
Inductively, we may assume that $|\mathcal{C'}|=q$, and for each $c\equiv a_1 \bmod 2$, we have
\[|\{(c_1,\dots, c_l)\in \mathcal{C'}\,|\, c_1=c\}|=
\begin{cases}
r   &\text{for } -a_1<c<a_1\\
r-r' & c=a_1
\end{cases}
\]
and
\[|\{\spincs'=(c_1,\dots, c_l)\in \mathcal{C'} \,|\, c_1=c \text{ and $\spincs'$ is left full}\}|=
\begin{cases}
r'   &\text{for } -a_1<c<a_1\\
0   &\text{for } c=a_1,
\end{cases}
\]
where $r/r'=[a_2,\dots, a_l]^-$.
For $c\equiv a_0 \bmod 2$ in the range $-a_0< c\leq a_0$, take $\spincs=(c, c_1,\dots, c_l)$. If $c<a_0$, then $\spincs \in \mathcal{C}$ if and only if $(c_1,\dots, c_l)\in \mathcal{C'}$, and $\spincs$ is left-full if and only if $c_1=a_1$ or $c_1=a_1-2$ and $(c_1,\dots, c_l)\in \mathcal{C'}$ is left full. Therefore,
\[|\{(c_0,\dots, c_l)\in \mathcal{C} \,|\, c_0=c\}|=|\mathcal{C'}|=q\]
and
\[|\{\spincs=(c_0,\dots, c_l)\in \mathcal{C} \,|\, c_0=c \text{ and $\spincs$ is left full}\}|= (r-r')+r'=r,\]
for $c<a_0$. If $c=a_0$, then $\spincs \in \mathcal{C}$ if and only if $(c_1,\dots, c_l)\in \mathcal{C'}$ and $\spincs$ contains no full tanks. Equivalently, $\spincs \in \mathcal{C}$ if and only if $(c_1,\dots, c_l)\in \mathcal{C'}$ and $\spincs$ is not left-full. As above, we see that there are $r-r'$ choices of $\spincs'=(c_1,\dots, c_l)\in \mathcal{C'}$ with $c_1=a_1$ and $r'$ choices with $c_1=a_1-2$ and $\spincs'$ is left-full. This shows that
\[|\{(c_0,\dots, c_l)\in \mathcal{C} \,|\, c_0=a_0\}|=q-r\]
and
\[|\{\spincs=(c_0,\dots, c_l)\in \mathcal{C} \,|\, c_0=a_0 \text{ and $\spincs$ is left full}\}|=0,\]
as required. It is then easy to see that $|\mathcal{C}|=(a_0-1)q+q-r=p$.
\end{proof}
It will be useful to consider the following operations on $\Char(W)$
\begin{defn}
Given $\spincs=(c_0, \dots, c_l) \in \Char(W)$ with $c_i=a_i$, we say that $\spincs-2\PD(h_i)$ is the {\em push-down of $\spincs$ at $i$}. Similarly, if $c_i=-a_i$ we say that $\spincs+2\PD(h_i)$ is the {\em push-up of $\spincs$ at $i$}.
\end{defn}
Note that if $\spincs'$ is obtained from $\spincs$ by a push-up or a push-down then $[\spincs]=[\spincs']$ and \eqref{eq:normcalc} shows that $\norm{\spincs}=\norm{\spincs'}$.
Recall that with respect to the bases in use the map $\PD$ is given by multiplication by $M$. Thus we see that if $\spincs'=\spinc\pm 2\PD(h_i)$ is obtained from $\spincs$ by a push-up or push down at $i$ then $\spincs'=(c_0', \dots, c_l')$,
where
\begin{equation}\label{eq:coordchange}
c_k'=
\begin{cases}
\pm a_i &\text{if $k=i$}\\
c_k\mp 2 &\text{if $|k-i|=1$}\\
c_k &\text{if $|k-i|>1$}
\end{cases}
\end{equation}
It follows from \eqref{eq:coordchange} that if $\spincs \in \mathcal{M}$, then any $\spincs'$ obtained from $\spincs$ by a push-up or a push-down is also in $\mathcal{M}$.
\begin{prop}\label{prop:pushuppreservesM}
Suppose that $\spincs'$ is obtained from $\spincs'$ by a push-up or a push-down. Then $\spincs\in \mathcal{M}$ if and only if $\spincs'\in \mathcal{M}$.
\end{prop}

\begin{proof}
Since the effect of a push-up can be undone by a push-down, it suffices to consider the case that $\spincs'=\spincs - 2\PD(h_i)=(c_0',\dots, c_l')$ is obtained by a push-down on $\spincs=(c_0,\dots, c_l)$. By \eqref{eq:coordchange}, this means $c_i=a_i=-c_i'$, $c_k'=c_k$ for $|k-i|>1$ and $c_k'=c_k +2$ for $|k-i|=1$. By carefully comparing the coordinates of $\spincs$ and $\spincs$ with the definition of $\mathcal{M}$, we can establish the following claim.

 \begin{claim}
$\spincs' \not\in \mathcal{M}$ only if $\spincs \not\in \mathcal{M}$.
 \end{claim}
 \begin{proof}[Proof of Claim.]
Suppose that $\spincs' \not\in \mathcal{M}$. This means that either $(a)$ $|c_k'|>a_k$ for some $k$, $(b)$ $\spincs'$ contains a full-tank, or $(c)$ $-\spincs'$ contains a full-tank. We consider each of these in turn.

First, suppose there is $k$ such that $|c_k'|>a_k$. Since $c_i'=-a_i$, we have necessarily have $k\ne i$. If $|k-i|>1$, then $|c_k'|=|c_k|>a_k$, which shows that $\spincs \not\in \mathcal{M}$. If $|k-i|=1$, then $c_k'-2=c_k$, so we either have $c_k=a_k$, giving a full-tank in $\spincs$, or $|c_k|>a_k$. In either case, this means $\spincs\not\in \mathcal{M}$.

Now suppose that $\spincs'$ contains a full-tank, say $c_k'=a_k$, $c_{k'}'=a_{k'}$ and $c_j'=a_j-2$ for $k<j<k'$. Since $c_i'=-a_i$, we must have $k'<i$ or $k>i$. Without loss of generality, assume $k>i$. If $k>i+1$ then $c_k=a_k$, $c_{k'}=a_{k'}$ and $c_j=a_j-2$ for $k<j<k'$, so there is a full-tank in $\spincs\not\in \mathcal{M}$. If $k=i+1$, then $c_i=a_i$, $c_{k'}=a_{k'}$ and $c_j=a_j-2$ for $i<j<k'$, so there is a full-tank in $\spincs$.

Finally, suppose that $-\spincs'$ contains a full-tank, say $c_k'=-a_k$, $c_{k'}'=-a_{k'}$ and $c_j'=2-a_j$ for $k<j<k'$. Since $c_i'=-a_i$ we must have $k'\leq i$ or $k \geq i$. Without loss of generality, assume $k\geq i$. If $k>i+1$ then, as before, then there is clearly a full-tank in $\spincs\not\in \mathcal{M}$. If $k=i+1$, then $c_{i+1}=-a_{i+1}-2$, which shows $\spincs\not\in \mathcal{M}$. If $k=i$, then $c_{i+1}=a_{i+1}$, $c_{k'}=a_{k'}$ and $c_j=a_j-2$ for $i+1<j<k'$, so there is a full-tank in $\spincs$.
\end{proof}
With this claim we can easily finish the proof of the proposition. If $\spincs'$ is obtained from $\spincs$ by a push-down, then $-\spincs$ can be obtained by from $-\spincs'$ by a push-down. Since $\spincs \in \mathcal{M}$ if and only if $-\spincs \in \mathcal{M}$, this shows that $\spincs' \in \mathcal{M}$ if and only if $\spincs \in \mathcal{M}$.
\end{proof}
Now we exhibit the short elements of $\Char(W)$ and show that $\mathcal{C}$ is a set of short representatives for $\spinc(S^3_{p/q}(K))$ (cf. \cite[Lemma~3.3]{Gibbons2013deficiency}.)
\begin{lem}\label{lem:minimisers}
The characteristic vector $\spincs = (c_0, \dots, c_l)\in \Char(W)$ is short if and only if $\spincs \in \mathcal{M}$. For every $\spinct \in \spinc(S^3_{p/q}(K))$, there is a unique $\spincs\in \mathcal{C}$ such that $[\spincs]=\spinct$.
\end{lem}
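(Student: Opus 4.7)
The plan is to establish the norm-change formula
\[
\norm{\spincs + 2My} - \norm{\spincs} = 4(\spincs\cdot y + y^T M y)
\]
by direct expansion using the symmetry of $M$, where $\spincs\cdot y := \sum c_i y_i$, and then handle the two claims of the lemma separately.

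For the implication ``short $\Rightarrow \spincs \in \mathcal{M}$'' I would exhibit an explicit norm-reducing move for each type of violation. An entry with $|c_i|>a_i$ is reduced by $y=\mp e_i$; a full tank in $\spincs$ at positions $(i,j)$ is reduced by $y = -\sum_{k=i}^{j}e_k$, and a full tank in $-\spincs$ is handled by the negated choice. A short calculation using the tridiagonal form of $M$ shows the norm change equals $-8$ in both tank cases.

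The hard part will be the converse: showing $g(y) := \spincs\cdot y + y^T M y \geq 0$ for every $y \in \mathbb{Z}^{l+1}$ when $\spincs \in \mathcal{M}$. My approach is a three-step reduction. First, splitting $y = y^+ - y^-$ into its positive and negative parts yields the identity
\[
g(y) = g(y^+) + g(-y^-) + 2\sum_{i}\bigl(y_i^+ y_{i+1}^- + y_i^- y_{i+1}^+\bigr),
\]
whose cross term is nonnegative, so it suffices to treat sign-definite $y$. Second, for $y \geq 0$, writing $\supp(y)$ as a disjoint union of maximal consecutive intervals $I_1,\dotsc,I_m$ gives $g(y) = \sum_k g(y|_{I_k})$, reducing to the single-interval case. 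Third, for $y \geq 0$ supported on $[i,j]$ with all $y_k\geq 1$, I would induct on $\sum y_k$. The base $y=\sum_{k=i}^{j}e_k$ gives $g(y) = \sum_{k=i}^{j}(c_k+a_k) - 2(j-i)$, which is $\geq 0$ precisely because $-\spincs$ contains no full tank at $(i,j)$. For the inductive step, a direct computation (using that the coordinates of $M\sum_{k=i}^{j}e_k$ are nonnegative on $[i,j]$) yields $g(y) - g\bigl(y - \sum_{k=i}^{j}e_k\bigr) \geq g\bigl(\sum_{k=i}^{j}e_k\bigr) \geq 0$. The analogous analysis for $y \leq 0$ uses the absence of a full tank in $\spincs$.

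For the second statement, invoke Lemma~\ref{lem:spinccount} to obtain $|\mathcal{C}| = p = |\spinc(S^3_{p/q}(K))|$; it then suffices to show that the natural map $\mathcal{C} \to \spinc(S^3_{p/q}(K))$ is surjective. Given $\spinct$, I start with a short representative $\spincs \in \mathcal{M}$ furnished by the first part. Whenever some $c_i = -a_i$, the ``flip'' $\spincs \mapsto \spincs + 2Me_i$ preserves norm (the change $4(c_i+a_i)$ vanishes), and a case analysis using the no-full-tank conditions in $\pm\spincs$ shows the flipped vector remains in $\mathcal{M}$. This procedure admits no cycles: any hypothetical cycle $\spincs_0 \to \dotsb \to \spincs_n = \spincs_0$ with flips at indices $i_1,\dotsc,i_n$ would give $M\sum_{k} e_{i_k} = 0$, forcing $\sum_k e_{i_k} = 0$ by invertibility of $M$ and hence $n = 0$. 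Since the set of $\mathcal{M}$-elements of fixed norm in a fixed class is finite, the procedure terminates at a vector with no $c_i = -a_i$, i.e., in $\mathcal{C}$.
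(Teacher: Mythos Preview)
Your proof is correct and takes a genuinely different route from the paper's. The paper establishes the forward implication (short $\Rightarrow \mathcal{M}$) just as you do, but for the converse it argues indirectly: it first proves a Claim that every $\spincs \in \mathcal{M}$ is equivalent, with equal norm, to some element of $\mathcal{C}$ (via an explicit trough-removal procedure), and then combines this with the count $|\mathcal{C}|=p$ from Lemma~\ref{lem:spinccount} to force each element of $\mathcal{C}$ to be the short representative of its class, whence every element of $\mathcal{M}$ is short. Your direct inductive proof that $g(y)\geq 0$ for all $y$ is more self-contained, making the characterisation of short vectors independent of the counting lemma; the positive/negative splitting and interval decomposition are clean, and the inductive step works because $Mz$ has nonnegative entries on the support interval (using $a_k\geq 2$ for interior indices). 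Two small remarks: your base-case justification should read ``no full tank of $-\spincs$ in any subinterval of $[i,j]$'' --- if $\sum_{k=i}^j(c_k+a_k)<2(j-i)$, it is a \emph{minimal} failing subinterval that exhibits the tank, not necessarily $[i,j]$ itself --- and your case analysis that the flip stays in $\mathcal{M}$ is unnecessary once you have the first part, since the flip preserves norm and class, hence shortness, hence membership in $\mathcal{M}$. Your no-cycle argument via the invertibility of $M$ is a slick alternative to the paper's monotone termination.
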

\begin{proof}
Take $\spincs=(c_0, \dots, c_l)\in \Char(W)$. If $c_i>a_i$ for some $i$, then \eqref{eq:normcalc} shows that $\norm{\spincs - 2 \PD(h_i)}<\norm{\spincs}$. As  $[\spincs- 2 \PD(h_i)]=[\spincs]$, this shows that $\spincs$ is not short unless $c_i\leq a_i$ for all $i$.

Now suppose that $\spincs$ contains a full tank, say $c_j=a_j$, $c_i=a_i$ and $c_k=a_k-2$ for all $i<k<j$. Consider the sequence of elements defined by
\[\spincs_{i-1}=\spincs
\quad \text{and} \quad \spincs_n=\spincs - 2\sum_{k=i}^{n}\PD(h_k) \quad\text{ for $i\leq n <j$.}\]
 We have $[\spincs]=[\spincs_n]$ for each $n$. Since $\spincs_{n}=\spincs_{n-1} - 2\PD(h_{n})$ for $i\leq n< j$, we can use \eqref{eq:coordchange} to see that $\spincs_{n}$ is obtained from $\spincs_{n-1}$ by a push-down and hence for $n<j$, \eqref{eq:normcalc} shows that
 \[\norm{\spincs}=\norm{\spincs_{i-1}}=\dots =\norm{\spincs_{j-1}}.\]
 However in terms of coordinates, we have $\spincs_{j-1}=(c_0', \dots, c_l')$, where $c_j'=a_j+2$. As we have already seen this means that $\spincs_{j-1}$ and, hence $\spincs$, cannot be short.

Since $\spincs$ is short if and only if $-\spincs$ is short, this shows that if $\spincs$ is short, then $|c_i|\leq a_i$ for all $i$ and neither $\spincs$ nor $-\spincs$ contain a full tank. That is, $\spincs$ is short only if $\spincs\in \mathcal{M}$. In order to prove the converse, we need the following claim.

\begin{claim} For every $\spincs \in \mathcal{M}$, there is $\spincs' \in \mathcal{C}$, with $\norm{\spincs}=\norm{\spincs'}$ and $[\spincs]=[\spincs']$.
\end{claim}
\begin{proof}[Proof of Claim]
Given $\spincs\in \mathcal{M}$, we call any $k$ such that $c_k=-a_k$ a {\em trough} for $\spincs$.
Observe that $\spincs\in \mathcal{C}$ if and only if $\spincs$ has no troughs. Given $\spincs_0\in\mathcal{M}$, we may construct a sequence of elements, where $\spincs_{n+1}$ is obtained from $\spincs_n$ by pushing up at a trough if $\spincs_n\notin \mathcal{C}$ and terminating if $\spincs_n\in\mathcal{C}$. Since $\mathcal{M}$ is finite, we can show that this sequence eventually terminates by checking that each of the $\spincs_n$ are distinct. Observe that for any $n$, there are non-negative integers $b_i$ such that $\spincs_n - \spincs_0=\sum_{i=0}^l 2b_i \PD(h_i)$. Since the $\PD(h_i)$ are linearly independent, and the $b_i$ satisfy $\sum_{i=0}^l b_i=n$. This implies that $\spincs_n=\spincs_{n'}$ only if $n=n'$, as required. Thus the sequence terminates with some $\spincs_N\in \mathcal{C}$. Such an element satisfies $[\spincs_N]=[\spincs_0]$ and $\norm{\spincs_0}=\norm{\spincs_N}$, as required.
\end{proof}

Since every $\spinct\in \spinc(S^3_{p/q}(K))$ has a short representative $\spincs'$, which is necessarily in $\mathcal{M}$, the above claim shows that it has a short representative $\spincs \in \mathcal{C}$. However Lemma~\ref{lem:spinccount} shows $|\spinc(S^3_{p/q}(K))|=|\mathcal{C}|=p$, so every element of $\mathcal{C}$ occurs as a short representative for precisely one element of $\spinc(S^3_{p/q}(K))$. It then follows from the above claim that every element of $\mathcal{M}$ must be short.
\end{proof}

We will define one more short set of representatives for $\spinc(S^3_{p/q}(K))$, which we call $\mathcal{F}$. Although the definition of $\mathcal{F}$ may appear unmotivated, one of Gibbons' key ideas in \cite{Gibbons2013deficiency} is that when it comes to working with $d$-invariants, $\mathcal{F}$ is a nicer set of representatives than $\mathcal{C}$ (see Lemma~\ref{lem:evaldi}). We obtain $\mathcal{F}$ from $\mathcal{C}$ as follows.
Take $\spincs =(c_0, \dots, c_l)\in \mathcal{C}$. If $\spincs$ is left-full and $c_0\geq 0$,  then we include $\spincs'=\spincs - 2\sum_{i=1}^k \PD(h_i)$ in $\mathcal{F}$, where $c_k=a_k$ and $c_i=a_i-2$ for $1\leq i <k$. So $\spincs'$ takes the form,
\[\spincs'=
\begin{cases}
(c_0+2, -a_1,2-a_2,\dots, 2-a_{k-1} ,2-a_k, c_{k+1}+2, c_{k+2}, \dots, c_{l}),
& \text{if $k>1$}\\
(c_0+2, -a_1,c_2+2, c_3,\dots, c_{l}), &\text{if $k=1$.}
\end{cases}
\]

Otherwise we include $\spincs$ in $\mathcal{F}$. The following lemma contains the properties of $\mathcal{F}$ that we will require.
\begin{lem}\label{lem:Fproperties}
Every element of $\mathcal{F}$ is short and for each $\spinct \in \spinc(S^3_{p/q}(K))$, there is a unique $\spincs \in \mathcal{F}$ with $[\spincs]=\spinct$. For each $c\equiv a_0 \bmod 2$ and $-a_0<c \leq a_0$, we have
\[|\{(c_0,\dots, c_l)\in \mathcal{F}\,|\, c_0=c\}|=
\begin{cases}
q   &\text{for } c\notin \{0,1\}\\
q-r &\text{for } c\in \{0,1\}
\end{cases}
\]
\end{lem}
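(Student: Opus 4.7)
The plan is to establish the three assertions of the lemma—shortness of each $\spincs \in \mathcal{F}$, unique representation of $\spinc(S^3_{p/q}(K))$, and the count—by leveraging Lemmas~\ref{lem:minimisers} and \ref{lem:spinccount} together with a careful analysis of the map $\mathcal{C}\to\mathcal{F}$. Recall that this map fixes each $\spincs=(c_0,\dotsc,c_l)\in\mathcal{C}$ which is either not left-full or has $c_0<0$, and otherwise replaces it by $\spincs'=\spincs-2\sum_{i=1}^k PD(h_i)$, where $k$ is the smallest positive index with $c_k=a_k$.

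For shortness, only these transformed elements $\spincs'$ require attention. I would apply \eqref{eq:inprodcalc} iteratively, subtracting $2PD(h_j)$ in the order $j=k, k-1, \dotsc, 1$, and check inductively that immediately before the subtraction at step $j$ the $j$th coordinate equals $a_j$: this holds for $j=k$ by left-fullness of $\spincs$, and at each subsequent step the prior subtraction of $2PD(h_{j+1})$ has lifted the $j$th coordinate from $a_j-2$ to $a_j$. Each step then contributes $-4a_j+4a_j=0$ to the norm, giving $\norm{\spincs'}=\norm{\spincs}$; by Lemma~\ref{lem:minimisers}, $\spincs'\in\mathcal{M}$ and is therefore short.

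For unique representation, the transformation $\mathcal{C}\to\mathcal{F}$ preserves equivalence modulo $2PD(H_2(W))$ and is a bijection: injectivity follows because distinct elements of $\mathcal{C}$ represent distinct \spinc-structures, and surjectivity holds by definition of $\mathcal{F}$. Thus $|\mathcal{F}|=|\mathcal{C}|=p=|\spinc(S^3_{p/q}(K))|$, and $\mathcal{F}$ inherits from $\mathcal{C}$ the property of being a complete set of representatives.

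For the count, I would partition $\{\spincs\in\mathcal{F}:c_0=c\}$ into elements inherited unchanged from $\mathcal{C}$ (those $\spincs\in\mathcal{C}$ with $c_0=c$ that are not left-full with $c_0\geq 0$) and new elements of the form $\spincs'$ (coming from left-full $\spincs\in\mathcal{C}$ with $c_0=c-2\geq 0$). Applying Lemma~\ref{lem:spinccount} in each case yields: for $c=a_0$ there are $q-r$ inherited elements (all of them, since none are left-full) and $r$ new ones (from left-full $\spincs$ with $c_0=a_0-2$), totalling $q$; for the intermediate positive values $2\leq c<a_0$ (or $3\leq c<a_0$, depending on the parity of $a_0$) there are $q-r$ inherited and $r$ new, again totalling $q$; for $c\in\{0,1\}$ there are $q-r$ inherited and no new elements, since the required source value $c-2<0$ is excluded by the $c_0\geq 0$ condition; and for $c<0$ all $q$ elements of $\mathcal{C}$ with $c_0=c$ are inherited. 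The delicate step is the iterated shortness calculation, where one has to track coordinates through the $k=1$ versus $k>1$ cases correctly; the count is then routine book-keeping from Lemma~\ref{lem:spinccount}.
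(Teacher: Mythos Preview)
Your proof is correct and follows essentially the same approach as the paper's. The only minor difference is in establishing shortness: the paper simply asserts $\mathcal{F}\subseteq\mathcal{M}$ and invokes Lemma~\ref{lem:minimisers}, whereas you verify $\norm{\spincs'}=\norm{\spincs}$ via the iterated application of \eqref{eq:inprodcalc} and then deduce shortness from $\spincs\in\mathcal{C}$ being short; both routes are valid, and your count argument is just a more detailed version of the paper's one-sentence bookkeeping.
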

\begin{proof} Since $\mathcal{F} \subseteq \mathcal{M}$, every element of $\mathcal{F}$ is short. By construction, for every $\spincs' \in \mathcal{F}$, we either have $\spincs'\in \mathcal{C}$  or there $\spincs'$ is obtained from an element of $\spincs \in \mathcal{C}$ with $[\spincs']=[\spincs]$ by a sequence of push-downs. This shows that $\mathcal{F}$ is a complete set of representatives for $\spinc(S^3_{p/q}(K))$. If $\spincs'=(c_0', \dots, c_l')\ne \spincs=(c_0,\dots, c_l)$, then $c_0'=c_0 +2\geq 2$, and $(c_0, \dots, c_l)$ is left full. Since Lemma~\ref{lem:spinccount} shows that there are $r$ such left-full tuples for each $c_0<a_0$, when we construct $\mathcal{F}$ we increase the first coordinate of $r$ tuples in $\mathcal{C}$ for each $a_0>c_0\geq 0$. This shows that we have the required number for each choice of first coordinate $-a_0<c_0\leq a_0$.
\end{proof}

\subsection{Calculating $d$-invariants}\label{sec:calcdinvariants}
In this section, we set about calculating the $d$-invariants for \spinc-structures on $S^3_{p/q}(K)$ using the sets of representatives given in the previous section.
Since the intersection form on $H^2(W)$ is independent of the choice of the knot $K$, it gives natural choices of correspondences,
\[\spinc(W(K)) \leftrightarrow \Char(W(K)) \leftrightarrow \Char(W(U)) \leftrightarrow \spinc(W(U)),\]
and hence also a choice of correspondence
\begin{equation}\label{eq:homecorrespondence}
\spinc(S_{p/q}^3(K))\leftrightarrow \spinc(S_{p/q}^3(U)).
\end{equation}
Using this we can define $D^{p/q}_K:\spinc(S_{p/q}^3(K)) \rightarrow \mathbb{Q}$ by
\begin{equation*}\label{eq:Ddefinition}
D^{p/q}_K(\spinct)=d(S_{p/q}^3(K),\spinct)-d(S_{p/q}^3(U),\spinct).
\end{equation*}

One can also establish identifications \cite{ozsvath2011rationalsurgery},
\begin{equation}\label{eq:spinccorrespondence}
\spinc(S_{p/q}^3(K)) \leftrightarrow \mathbb{Z}/p\mathbb{Z} \leftrightarrow \spinc(S_{p/q}^3(U)),
\end{equation}
by using relative \spinc-structures on $S^3 \setminus \mathring{\nu} K$. Using this identification, one can similarly define $\widetilde{D}^{p/q}_K: \mathbb{Z}/p\mathbb{Z}\rightarrow \mathbb{Q}$ by the formula
\begin{equation}\label{eq:tildeddef}
\widetilde{D}^{p/q}_K(i):=d(S_{p/q}^3(K),i)-d(S_{p/q}^3(U),i).
\end{equation}
The work of Ni and Wu shows that for $0\leq i \leq p-1$, the values $\widetilde{D}^{p/q}_K(i)$ may be calculated by the formula \cite[Proposition 1.6]{ni2010cosmetic},
\begin{equation}\label{eq:NiWuHV}
\widetilde{D}^{p/q}_K(i)=-2\max\{V_{\lfloor \frac{i}{q} \rfloor},H_{\lfloor \frac{i-p}{q} \rfloor}\},
\end{equation}
where $V_j$ and $H_j$ are sequences of positive integers depending only on $K$, which are non-increasing and non-decreasing respectively. Since we also have that $H_{-i}=V_{i}$ for all $i\geq 0$ [Proof of Theorem~3] \cite{Owens2016Immersed}, this can be rewritten as
\begin{equation}\label{eq:NiWuVonly}
\widetilde{D}^{p/q}_K(i)=-2V_{\min \{\lfloor \frac{i}{q} \rfloor,\lceil \frac{p-i}{q} \rceil\}}.
\end{equation}

When $p/q=n$ is an integer, the correspondence \eqref{eq:spinccorrespondence} can be easily reconciled with the one in \eqref{eq:homecorrespondence}. In this case, $W$ is obtained by attaching a single $n$-framed 2-handle to $D^4$, and the \spinc-structure $c=\Char(W)=\{(c)\,|\, c\equiv n \bmod 2\}$, is labelled by $i \bmod n$, when $n+c \equiv 2i \bmod{2n}$ \cite{Ozsvath2008integersurgery}. It is clear that in this case the correspondences in \eqref{eq:homecorrespondence} and \eqref{eq:spinccorrespondence} are the same. Hence for $c\equiv n \bmod 2$ satisfying $-n<c \leq n$, we have
\begin{equation}\label{eq:integercase}
D^n_K([c])=\widetilde{D}^{n}_K\left(\frac{n+c}{2}\right)=-2V_{\min \{\frac{n+c}{2},\frac{n-c}{2}\}}
=-2V_{\frac{n-|c|}{2}}.
\end{equation}
\begin{rem}
It turns out that the correspondences between $\spinc(S_{p/q}^3(K))$ and $\spinc(S_{p/q}^3(U))$ used in \eqref{eq:homecorrespondence} and \eqref{eq:spinccorrespondence} coincide in general. However, we will not use this fact.
\end{rem}
\begin{lem}\label{lem:spincsum}
If we write $p/q$ in the form $p/q=n-r/q$ with $q>r\geq 0$, then
\[\sum_{\spinct \in \spinc(S_{p/q}^3(K))} D^{p/q}_K(\spinct) = 2rV_{\lfloor\frac{n}{2}\rfloor}+\sum_{\spinct \in \spinc(S_{n}^3(K))}qD^{n}_K(\spinct) \]
\end{lem}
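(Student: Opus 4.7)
The plan is to translate both sides into sums of the $V_k$ via the Ni--Wu formula, then finish with an elementary combinatorial identity. The total $\sum_{\spinct} D^{p/q}_K(\spinct)$ does not depend on which of the two bijections \eqref{eq:homecorrespondence}, \eqref{eq:spinccorrespondence} between $\spinc(S^3_{p/q}(K))$ and $\spinc(S^3_{p/q}(U))$ one uses, since in either case the total equals $\sum_{\spinct}d(S^3_{p/q}(K),\spinct) - \sum_{\spincs}d(S^3_{p/q}(U),\spincs)$. Hence by \eqref{eq:NiWuVonly} and \eqref{eq:integercase}, setting $S_m := \sum_{j=0}^{m-1}V_{\min\{j,m-j\}}$, the claim reduces to
\[\sum_{i=0}^{p-1} V_{v(i)} = qS_n - rV_{\lfloor n/2\rfloor}, \qquad v(i):=\min\bigl\{\lfloor i/q \rfloor,\, \lceil (p-i)/q \rceil\bigr\}.\]

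Next I would evaluate the left side via the change of variables $i = qm+s$ with $0 \leq s < q$, where $p = nq-r$ restricts $(m,s)$ to $[0,n-2]\times[0,q-1]$ together with $\{n-1\}\times[0,q-r-1]$. A short floor-ceiling computation gives $\lceil(p-i)/q\rceil = n-m$ when $s+r<q$ and $n-m-1$ when $s+r\geq q$. The first case covers the $q-r$ values $s\in[0,q-r-1]$ for each $m\in[0,n-1]$ and contributes $(q-r)\sum_{m=0}^{n-1}V_{\min\{m,n-m\}} = (q-r)S_n$; the second case covers the $r$ values $s\in[q-r,q-1]$ for each $m\in[0,n-2]$ and contributes $r\sum_{m=0}^{n-2}V_{\min\{m,(n-1)-m\}} = rS_{n-1}$. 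Therefore $\sum_{i=0}^{p-1} V_{v(i)} = (q-r)S_n + rS_{n-1}$.

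To close, the identity reduces to $S_n - S_{n-1} = V_{\lfloor n/2\rfloor}$, which is immediate from the symmetric expansions $S_{2k} = V_0 + 2V_1 + \dots + 2V_{k-1} + V_k$ and $S_{2k+1} = V_0 + 2V_1 + \dots + 2V_k$. The main obstacle will be the floor-ceiling case analysis in the middle paragraph; conceptually, replacing the integer slope $n$ by $n-r/q$ creates $r$ residues per block of $q$ consecutive $i$ where the ceiling drops the $V$-index by one, and these defects telescope precisely to the correction $-rV_{\lfloor n/2\rfloor}$.
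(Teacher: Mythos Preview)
Your proposal is correct and follows exactly the approach the paper sketches: reduce both sides to sums of the $V_i$ via \eqref{eq:NiWuVonly} using the bijection-independence of the total, then verify the resulting combinatorial identity. The paper merely calls this last step ``a straight forward computation''; you have supplied it in full, and your floor/ceiling case split and the telescoping identity $S_n - S_{n-1} = V_{\lfloor n/2\rfloor}$ are both valid.
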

\begin{proof}
Observe that for any $0<\alpha/\beta \in \Q$, the sum 
\[\sum_{\spinct \in \spinc(S_{\alpha/\beta}^3(K))} D^{\alpha/\beta}_K(\spinct)\] is independent of the choices of correspondence between $\spinc(S_{\alpha/\beta}^3(K))$ and $\spinc(S_{\alpha/\beta}^3(U))$, in the sense that we have
\begin{align*}
\sum_{\spinct \in \spinc(S_{\alpha/\beta}^3(K))} D^{\alpha/\beta}_K(\spinct)
&=\sum_{\spinct \in \spinc(S_{\alpha/\beta}^3(K))} d(S_{\alpha/\beta}^3(K),\spinct) - \sum_{\spinct \in \spinc(S_{\alpha/\beta}^3(U))} d(S_{\alpha/\beta}^3(U),\spinct)\\
&=\sum_{i=0}^{p-1} \widetilde{D}^{\alpha/\beta}_K(i).
\end{align*}
This allows us to use \eqref{eq:NiWuVonly} to compute both \[\sum_{\spinct \in \spinc(S_{p/q}^3(K))} D^{p/q}_K(\spinct)\quad\text{and}\quad \sum_{\spinct \in \spinc(S_{n}^3(K))}D^{n}_K(\spinct)\] in terms of the $V_i$. Specifically, if we write $n=2k+\varepsilon$, where $k=\lfloor n/2\rfloor$ and $\varepsilon\in \{\pm 1\}$ we find that
\begin{align*}
\sum_{\spinct \in \spinc(S_{p/q}^3(K))} D^{p/q}_K(\spinct)&=-2\sum_{i=0}^{p-1} V_{\min \{\lfloor \frac{i}{q} \rfloor,\lceil \frac{p-i}{q} \rceil\}}\\
&=-2( qV_0 + \sum_{i=1}^{k-1} 2qV_i + (q+\varepsilon q-r)V_k).
\end{align*}
and
\begin{align*}
\sum_{\spinct \in \spinc(S_{n}^3(K))} D^{n}_K(\spinct)&=-2\sum_{i=0}^{n-1} V_{\min \{i, n-i\}}\\
&= -2(V_0 + 2\sum_{i=1}^{k-1} V_i + (1+\varepsilon) V_k).
\end{align*}
From this, the desired identity follows immediately.
\end{proof}
Since Ozsv{\'a}th and Szab{\'o} have shown that the manifold $-W(U)$ is sharp (see \cite{Ozsvath2003Absolutely_graded}, \cite{Ozsvath2003plumbed} or \cite{Ozsvath2005branched}), for any $\spincs \in \mathcal{M}$ we have
\begin{equation}\label{eq:lensspaced}
d(S_{p/q}^3(U), [\spincs])= \frac{\norm{\spincs}-b_2(W)}{4}=\frac{\norm{\spincs}-l-1}{4}.
\end{equation}
The following lemma allows us to calculate $D^{p/q}_K([\spincs])$ for $\spincs \in \mathcal{C}$.
\begin{lem}[Proof of Lemma~3.10, \cite{Gibbons2013deficiency}]\label{lem:evaldi}
For any $\spincs = (c_0, \dots, c_l)\in \mathcal{F}$, we have
\[D^{p/q}_K([\spincs])=D^{a_0}_K([c_0]) = -2V_{\frac{a_0-|c_0|}{2}}.\]
Consequently, for any $\spincs = (c_0, \dots, c_l)\in \mathcal{C}$, we have
\[D^{p/q}_K([\spincs])=
\begin{cases}
D^{a_0}_K([c_0+2])= -2V_{\frac{a_0-2-c_0}{2}}   &\text{if $0\leq c_0 < a_0$ and $\spincs$ is left full,}\\
D^{a_0}_K([c_0])= -2V_{\frac{a_0-|c_0|}{2}}     &\text{otherwise.}
\end{cases}
\]
\end{lem}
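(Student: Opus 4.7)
The plan is to reduce the computation of $D^{p/q}_K([\spincs])$ for $\spincs \in \mathcal{F}$ to the Ni--Wu formula \eqref{eq:NiWuVonly}, exploiting the fact that $\mathcal{F}$ is a complete set of representatives for $\spinc(S^3_{p/q}(K))$ by Lemma~\ref{lem:Fproperties}. Concretely, under any bijection $\mathcal{F} \to \{0, \dotsc, p-1\}$ induced by the two parameterisations \eqref{eq:homecorrespondence} and \eqref{eq:spinccorrespondence}, we have $D^{p/q}_K([\spincs]) = \widetilde{D}^{p/q}_K(i) = -2V_{\min\{\lfloor i/q \rfloor, \lceil (p-i)/q \rceil\}}$, so the task reduces to showing that the Ni--Wu level of the image $i$ of $\spincs$ is exactly $(a_0 - |c_0|)/2$.

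First I would dispose of the base case $l = 0$, where $p/q = a_0$ is an integer, $\mathcal{F} = \mathcal{C}$ consists of singletons $(c)$ with $c \equiv a_0 \pmod 2$ and $-a_0 < c \leq a_0$, and the explicit identification $(c) \leftrightarrow (a_0 + c)/2$ from the discussion preceding \eqref{eq:integercase}, combined with \eqref{eq:integercase} itself, delivers the formula at once. For general $p/q$, I would follow Gibbons in tracking the bijection explicitly. By Lemma~\ref{lem:Fproperties}, elements of $\mathcal{F}$ are grouped by first coordinate into blocks of size $q$ (or $q - r$ at $c_0 \in \{0, 1\}$), while the Ni--Wu formula, via the monotonicities of $\lfloor i/q \rfloor$ and $\lceil (p-i)/q \rceil$, partitions $\{0, \dotsc, p-1\}$ into blocks of constant Ni--Wu level, also of size $q$ (or $q - r$ at the middle). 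The assertion that the block with $c_0 = c$ corresponds to the block with Ni--Wu level $(a_0 - |c|)/2$ is then established by tracing through the characteristic covector representative of each relative \spinc-structure, as in the proof of Gibbons' Lemma~3.10.

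The consequence for $\spincs \in \mathcal{C}$ follows by inspecting the construction of $\mathcal{F}$ from $\mathcal{C}$. If $\spincs$ is not left-full with $c_0 \geq 0$, then $\spincs \in \mathcal{F}$ and the first formula applies directly. Otherwise $\spincs$ is replaced by $\spincs' = \spincs - 2\sum_{i=1}^{k} PD(h_i) \in \mathcal{F}$, whose first coordinate is $c_0 + 2 \geq 2$, and applying the first part to $\spincs'$ yields $D^{p/q}_K([\spincs]) = D^{p/q}_K([\spincs']) = -2V_{(a_0 - 2 - c_0)/2}$. The main obstacle is the matching step in the general case: the counts of $\mathcal{F}$-elements by $|c_0|$ and of $i$'s by Ni--Wu level agree in aggregate (which is consistent with Lemma~\ref{lem:spincsum}), but forcing the bijection to respect these levels elementwise requires care. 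An alternative I would consider, if the direct combinatorial identification proves cumbersome, is to compute $\norm{\spincs}$ for $\spincs \in \mathcal{F}$ explicitly and match it against the known $d$-invariants of the lens space $L(p, q)$ via \eqref{eq:lensspaced}, thereby pinning down the correspondence on a case-by-case basis.
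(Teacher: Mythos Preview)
Your approach is genuinely different from the paper's, and the difference is instructive. You propose to pin down the bijection between $\mathcal{F}$ and $\{0,\dotsc,p-1\}$ explicitly, so that the Ni--Wu formula can be read off coordinate-wise. The paper deliberately avoids this: immediately after \eqref{eq:integercase} it remarks that the two labelings of $\spinc(S^3_{p/q}(K))$ do coincide in general, but that ``we will not require this fact.'' Instead, the paper decomposes $W$ as $W_1\cup W_2$ with $W_2$ a positive-definite cobordism from $S^3_{a_0}(K)$ to $S^3_{p/q}(K)$, uses sharpness of $-W(U)$ together with the Ozsv{\'a}th--Szab{\'o} cobordism inequality to obtain the \emph{termwise} inequality $D^{a_0}_K([c_0])\geq D^{p/q}_K([\spincs])$ for every $\spincs\in\mathcal{M}$, and then observes via Lemma~\ref{lem:spincsum} and the count in Lemma~\ref{lem:Fproperties} that the sum over $\mathcal{F}$ of the two sides is equal. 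Termwise inequality plus equality of sums forces termwise equality, and the statement for $\mathcal{C}$ follows exactly as you describe.

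What each approach buys: the paper's squeeze argument is entirely self-contained within the machinery already set up (the sets $\mathcal{M},\mathcal{C},\mathcal{F}$ and Lemma~\ref{lem:spincsum}), and never needs to know which element of $\mathcal{F}$ corresponds to which $i\in\{0,\dotsc,p-1\}$. Your route, by contrast, reproduces Gibbons' original argument and would yield the stronger statement that the two parameterisations agree. However, as you yourself flag, the block-size matching you describe is not by itself enough---equal cardinalities of the $c_0$-blocks and the Ni--Wu-level blocks do not force the bijection to respect them, and you end up deferring the actual identification to ``tracing through the characteristic covector representative \dots as in the proof of Gibbons' Lemma~3.10.'' That is not illegitimate given the attribution in the lemma's header, but it leaves the central step unproved in your write-up. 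Your suggested alternative (matching $\norm{\spincs}$ against the lens-space $d$-invariants via \eqref{eq:lensspaced}) could also be made to work, but again you have not carried it out. The paper's argument sidesteps all of this with the inequality-plus-sum trick.
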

\begin{proof}
Observe that $W$ can be considered as the composition of positive-definite cobordisms,
\[ W_1:\emptyset \rightarrow S_{a_0}^3(K) \text{ and }  W_2:S_{a_0}^3(K) \rightarrow S_{p/q}^3(K),\]
where $b_2(W_1)=1$, $b_2(W_2)=l$ and for any $\spincs \in \spinc(W)$, we have
\[c_1(\spincs)^2= c_1(\spincs|_{W_1})^2+c_1(\spincs|_{W_2})^2.\]
Thus for any $\spincs \in \mathcal{M}$, \eqref{eq:lensspaced} shows that we have
\[
\frac{c_1(\spincs|_{W_2})^2 - l}{4}=
d(S_{p/q}^3(U),[\spincs])-d(S_{a_0}^3(U),[c_0]).
\]
For any $\spincs \in \spinc(W_2)$, which restricts to $\spinct_1$ and $\spinct_2$ on $S_{a_0}^3(K)$ and  $S_{p/q}^3(K)$ respectively, Ozsv{\'a}th and Szab{\'o} show that we get the bound \cite{Ozsvath2003Absolutely_graded}:
\[
\frac{c_1(\spincs)^2 - l}{4}\geq d(S_{p/q}^3(K),\spinct_2)-d(S_{a_0}^3(K),\spinct_1).
\]
Thus, if we take $\spincs|_{W_2}$ for some $\spincs = (c_0, \dots, c_l)\in \mathcal{M}$, then we get
\[
d(S_{p/q}^3(U),[\spincs])-d(S_{a_0}^3(U),[c_0])=
\frac{c_1(\spincs)^2 - l}{4}
\geq d(S_{p/q}^3(K),[\spincs])-d(S_{a_0}^3(K),[c_0]).
\]
Rearranging, this shows that we have
\[D^{p/q}_K([\spincs])\leq D^{a_0}_K([c_0])
\]
Therefore Lemma~\ref{lem:Fproperties} combined with this inequality shows that we have
\begin{align} \begin{split}\label{eq:sumoverF}
\sum_{\spinct \in \spinc(S_{p/q}^3(K))} D^{p/q}_K(\spinct)
&=\sum_{\spincs\in \mathcal{F}}D^{p/q}_K([\spincs])\\
&\leq\sum_{i=1}^{a_0} |\{(c_0,\dots, c_l)\in \mathcal{F}\, |\, c_0=2i-a_0\}| D^{a_0}_K([2i-a_0])  \\
&=2rV_{\lfloor\frac{a_0}{2}\rfloor}+q\sum_{\spinct \in \spinc(S_{a_0}^3(K))}D^{a_0}_{K}(\spinct).
\end{split}\end{align}
However, by Lemma~\ref{lem:spincsum} we must have equality in \eqref{eq:sumoverF}. Since we have termwise inequality in \eqref{eq:sumoverF}, this implies that we must have equality
\[D^{a_0}_K([c_0])= D^{p/q}_K([\spincs]),\]
for each $\spincs=(c_0,\dots, c_l)\in \mathcal{F}$. Using \eqref{eq:integercase} gives the required result in terms of the $V_i$. The formula for $D^{p/q}_K([\spincs])$ when $\spincs \in \mathcal{C}$ follows directly from the construction of $\mathcal{F}$.
\end{proof}

\subsection{Cobordisms}\label{sec:cobords}
Owens and Strle show that if $S^3_{p'/q'}(K)$ bounds a negative-definite manifold, then so does $S^3_{p/q}(K)$ for any $p/q>p'/q'$ by gluing a sequence of negative-definite cobordisms to the original manifold \cite{Owens2012negdef}. We take the same approach to prove Theorem~\ref{thm:sharpextension}.

Let $p/q=[a_0, \dots , a_l]^-$ and $p'/q'=[a_0, \dots, a_l, b_1, \dots, b_k]^-$, where $a_1,b_k\geq 1$ and $a_i,b_j \geq 2$ for all $1\leq i \leq l$ and $1\leq j <k$. Observe that we have $p/q> p'/q'$. Now let $W$ and $W'$ be the 4-manifolds bounding $Y= S_{p/q}^3(K)$ and $Y'=S_{p'/q'}^3(K)$ obtained by attaching 2-handles according to the two given continued fractions as in Figure~\ref{fig:kirbydiagram}. The manifold $W$ is naturally included as a submanifold in $W'$ and $Z=W' \setminus({\rm int}W)$ is a positive-definite cobordism from $S_{p/q}^3(K)$ to $S_{p'/q'}^3(K)$. As in the previous section, we may take a basis for the homology groups $H_2(W)$ and $H_2(W')$ given by the 2-handles and in the same way we may identify $\spinc(W)$ and $\spinc(W')$ with $\Char(H_2(W))$ and $\Char(H_2(W'))$ respectively. We can also define subsets $\mathcal{C}\subseteq \mathcal{M} \subseteq \Char(H_2(W))$ and $\mathcal{C'} \subseteq \mathcal{M'}\subseteq \Char(H_2(W'))$, as in Section~\ref{sec:representatives}.
\begin{lem}\label{lem:spinextension}
Take $\spincs=(c_0,\dots, c_l) \in \mathcal{C}\subseteq \Char(H_2(W))$.
If $(a_0,\dots, a_l,b_1,\dots, b_k)\ne (a_0, 2 , \dots,2, 1)$ or $\spincs\ne (0,\dots,0)$, then there is some short \spinc-structure $\spincs' \in \spinc(W')$, such that $\spincs'|_W=\spincs$ and $D^{p/q}_K(\spincs|_{Y})=D^{p'/q'}_K(\spincs|_{Y'})$.

If $(a_0,\dots, a_l,b_1,\dots, b_k)=(a_0, 2 , \dots,2, 1)$ and $\spincs= (0,\dots,0)$, then there is some short \spinc-structure $\spincs' \in \spinc(W')$, such that $\spincs'|_W=\spincs$ and $D^{p'/q'}_K(\spincs|_{Y'})=-2V_{\frac{a_0-2}{2}}$ and $D^{p/q}_K(\spincs|_{Y})=-2V_{\frac{a_0}{2}}$.
\end{lem}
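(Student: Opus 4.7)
The plan is to construct, for each $\spincs = (c_0, \ldots, c_l) \in \mathcal{C}$, an explicit extension $\spincs' = (c_0, \ldots, c_l, d_1, \ldots, d_k) \in \spinc(W')$ that lies in $\mathcal{M}'$ (hence is short by Lemma~\ref{lem:minimisers}) and whose $\mathcal{F}'$-representative has the same first coordinate and left-fullness status as the $\mathcal{F}$-representative of $\spincs$. By Lemma~\ref{lem:evaldi}, this equality forces $D^{p/q}_K([\spincs]) = D^{p'/q'}_K([\spincs'])$ outside the exceptional configuration.

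First I would take $(d_1, \ldots, d_k)$ near the lower end of the $\mathcal{C}'$-range, starting from the canonical choice $d_j = 2 - b_j$, which is automatically parity-compatible and satisfies $2 - b_j \leq d_j \leq b_j$. With this choice the only new coordinate that can equal its coefficient is $d_k$ in the case $b_k = 1$. When this naive assignment creates a full tank in $\spincs'$ or $-\spincs'$ across the interface between $c_l$ and $d_1$ (which can only happen under very restrictive constraints on the values $c_i$ inherited from $\spincs \in \mathcal{M}$), I would adjust by setting $d_1 = -b_1$. This places $\spincs'$ outside $\mathcal{C}'$ but keeps it in $\mathcal{M}'$, and applying the trough-elimination procedure from the claim inside the proof of Lemma~\ref{lem:minimisers} produces a $\mathcal{C}'$-representative whose first coordinate and left-full status can be read off directly.

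Next, I would verify that the $\mathcal{F}'$-representative of $\spincs'$ has first coordinate $c_0$ whenever $\spincs$ is not left-full or $c_0 < 0$, and first coordinate $c_0 + 2$ whenever $\spincs$ is left-full with $c_0 \geq 0$. In the first scenario, the near-boundary choice of $(d_j)$ keeps the first coordinate fixed and no new left-full witness is created (any witness at index $\leq l$ is also a witness for $\spincs$; a witness at index $> l$ would demand $c_j = a_j - 2$ for $1 \leq j \leq l$ and $d_j = b_j - 2$ for $1 \leq j < k'$, which the choice $d_j = 2 - b_j$ generically rules out). In the second scenario, the existing left-full witness inside the first $l+1$ coordinates is preserved verbatim. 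Either way, Lemma~\ref{lem:evaldi} then gives the desired equality of $D$-invariants.

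Finally, the exceptional case is exactly the configuration where the combinatorics force a new left-full witness at index $l+k$: when $(a_0, a_1, \ldots, a_l, b_1, \ldots, b_k) = (a_0, 2, \ldots, 2, 1)$ and $\spincs = (0, \ldots, 0)$, the $\mathcal{C}'$-constraints force $d_k = 1 = b_k$, and avoiding the full tank formed with any $d_j = 2$ forces $d_j = 0 = b_j - 2$ for every $1 \leq j < k$; the resulting $\spincs'$ is left-full with $c_0 = 0 \geq 0$, so Lemma~\ref{lem:evaldi} gives $D^{p'/q'}_K([\spincs']) = -2V_{(a_0-2)/2}$, while $\spincs$ itself is not left-full and yields $D^{p/q}_K([\spincs]) = -2V_{a_0/2}$. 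The main obstacle will be the interface case analysis in the second paragraph: one must control simultaneously the full-tank conditions on $\spincs'$ and on $-\spincs'$, showing that in every non-exceptional configuration an at-most-one-coordinate adjustment of the lower-boundary extension delivers an element of $\mathcal{M}'$ whose $\mathcal{F}'$-representative has the required first coordinate and left-full status.
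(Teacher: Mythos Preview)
Your overall strategy—extend $\spincs$ by appending coordinates near $2 - b_j$, land in $\mathcal{M}'$, and read off the $D$-value via the left-full status and Lemma~\ref{lem:evaldi}—is the same as the paper's, and your treatment of the case where some $b_j > 2$ or $b_k > 1$ is exactly right.

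The gap is in the remaining case $b_1 = \dotsb = b_{k-1} = 2$, $b_k = 1$. Your adjustment fires only on full tanks, but the real danger is that the naive extension becomes left-full when $\spincs$ is not, \emph{without} any full tank appearing. Take $c_j = a_j - 2$ for all $1 \leq j \leq l$ with some $a_j > 2$, and $0 < c_0 < a_0$. Then $\spincs$ is not left-full, and your $\spincs' = (c_0, \ldots, c_l, 0, \ldots, 0, 1)$ lies in $\mathcal{C}'$ with no full tank (the only coordinate equal to its coefficient is the last), so your adjustment never fires. But $\spincs'$ \emph{is} left-full with witness at index $l+k$, so Lemma~\ref{lem:evaldi} gives $D^{p'/q'}_K([\spincs']) = -2V_{(a_0 - c_0 - 2)/2}$, not $-2V_{(a_0 - c_0)/2} = D^{p/q}_K([\spincs])$. (For a concrete instance: $(a_0,a_1,b_1)=(5,3,1)$, $\spincs=(3,1)$; your $\spincs'=(3,1,1)$ is left-full and gives $-2V_0$, while $\spincs$ gives $-2V_1$.) The same failure occurs when all $a_j = 2$ but $c_0 > 0$, which is still non-exceptional. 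Your phrase ``generically rules out'' hides exactly these cases.

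The paper's fix in this regime is to take the last coordinate to be $-1$ rather than $+1$, i.e.\ $\spincs'=(c_0,\ldots,c_l,0,\ldots,0,-1)$. When some $c_j > 2 - a_j$ or $a_j > 2$ (for $j\geq 1$), one then passes to $\spincs'' = \spincs' + 2\sum_{i=1}^k i\,PD(h_i')\in\mathcal{C}'$, which decreases $c_l$ by $2$ and flips the last coordinate to $+1$, breaking the spurious left-full chain at index $l$. When instead $c_j = 0 = a_j - 2$ for all $j \geq 1$, the paper chooses the sign of the last coordinate according to the sign of $c_0$, so that either $\spincs'$ or $-\spincs'$ lies in $\mathcal{C}'$ with negative first coordinate, making the left-full branch of Lemma~\ref{lem:evaldi} harmless. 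Your single adjustment $d_1 \mapsto -b_1$, triggered only by full tanks, does not accomplish either of these.
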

\begin{proof}
Take $\spincs= (c_0, \dots, c_l) \in \mathcal{C}$. First suppose that one of the following holds:
\begin{enumerate}[(i)]
\item $b_k>1$; or
\item $b_j>2$ for some $1\leq j <k$.
\end{enumerate}
In this case, let $\spincs'$ be the \spinc-structure given by.
\[\spincs'=( c_0, \dots, c_l, 2-b_1, \dots, 2-b_k).\]
It is clear that $\spincs'$ restricts to $\spincs$ on $W$. If $(i)$ holds, then $2-b_k<b_k$ and hence we have $2-b_i<b_i$ for all $1\leq i \leq k$. If $(ii)$ holds, then we have some $j<k$ such that $2-b_j<b_j-2$. In either case, this shows that $\spincs'$ contains no full tanks and that $\spincs'$ is left-full if and only if $(c_1, \dots, c_l)$ is left-full. Therefore, $\spincs' \in \mathcal{C}'$ and by Lemma~\ref{lem:minimisers} and Lemma~\ref{lem:evaldi}, we have that $\spincs'$ is short and that $D^{p/q}_K([\spincs])=D^{p'/q'}_K([\spincs'])$. This proves the lemma if either $(i)$ or $(ii)$ hold.

Therefore we may assume that $b_i=2$ for $1\leq i < k$ and $b_k=1$. We consider the case where there is $0<j\leq l$ such that $c_j>2-a_j$ or $a_j>2$.
In this case, we define
\[\spincs'=( c_0, \dots, c_l, 0, \dots,0, -1).\]
This clearly restricts to $\spincs$ and by Lemma~\ref{lem:minimisers} it is short. It remains to calculate $D^{p'/q'}_K([\spincs'])$. Take $0<t\leq l$ to be maximal such that $a_t>2$ or $c_t>2-a_t$. Since $c_j=0$ and $a_j=2$ for all $l \geq j>t$, we can assume for convenience that $t=l$. For $1\leq i \leq k$, let $h_i'$ denote 2-handle attached with framing $b_i$ in the handle decomposition of $W'$. Consider now the \spinc-structure $\spincs''$ defined by
\begin{align*}
\spincs''&= \spincs'+2\sum_{i=1}^k i\PD(h_i')\\
         &= (c_0,\dots, c_{l-1}, c_l-2, 0 , \dots , 0,1).
\end{align*}
By construction, we have that $[\spincs'']=[\spincs']\in \spinc(S_{p'/q'}^3(K))$, $\spincs''\in \mathcal{C'}$ and $\spincs''$ is left-full if and only if $\spincs$ is left-full. Thus by Lemma~\ref{lem:evaldi}, we have
\[D^{p/q}_K([\spincs])=D^{p'/q'}_K([\spincs''])= D^{p'/q'}_K([\spincs']).\]

Thus it remains only to prove the lemma when $c_j=2-a_j=0$ for all $0<j\leq l$. In this case, we may take
\[
\spincs'=
\begin{cases}
(c_0,0, \dots,0,-1) &\text{if }c_0>0\\
(c_0,0, \dots,0,1) &\text{if }c_0\leq 0.\\
\end{cases}
\]
We have either $\spincs'\in \mathcal{C'}$ or $-\spincs'\in \mathcal{C'}$. In either case, Lemma~\ref{lem:minimisers} and Lemma~\ref{lem:evaldi}, show that it has the required properties. In particular, if $c_0=0$, then $a_0$ is necessarily even and we have \[D^{p'/q'}_K([\spincs'])=-2V_{\frac{a_0-2}{2}}\text{ and } D^{p/q}_K([\spincs])=-2V_{\frac{a_0}{2}},\]
as required.
\end{proof}

\begin{lem}\label{lem:specialcase}
If $S^3_{2n}(K)$ bounds a sharp 4-manifold for $n\geq 1$, then $V_{n}=V_{n-1}=0$.
\end{lem}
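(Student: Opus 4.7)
The plan is to extract from the sharpness hypothesis explicit characteristic vectors in $H^{2}(X;\mathbb Z)$ associated to specific spin$^{c}$-structures on $Y=S^{3}_{2n}(K)$, and then use the lattice structure of $(H^{2}(X),Q_{X}^{-1})$ to force $V_{n}=V_{n-1}=0$. Throughout, let $X$ be sharp with $\partial X=Y$ and write $b=b_{2}(X)$.

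First, I would focus on the self-conjugate spin$^{c}$-structure coming from $c=0\in\mathcal C$ and on the non-self-conjugate $c=2$. By sharpness and Lemma~\ref{lem:evaldi}, there exist characteristic classes $\xi_{0},\xi_{2}\in H^{2}(X;\mathbb Z)$ satisfying
\[
\xi_{0}^{2}+b=-1-8V_{n}\qquad\text{and}\qquad \xi_{2}^{2}+b=\tfrac{2}{n}-1-8V_{n-1}.
\]
Since $\spincs_{[c=0]}$ has trivial first Chern class in $H^{2}(Y)\cong\mathbb Z/2n$, one has $\xi_{0}\in PD(H_{2}(X))$; write $\xi_{0}=PD(u_{0})$, so that $u_{0}$ is a characteristic vector of the integer lattice $(H_{2}(X),Q_{X})$ and $\xi_{0}^{2}=u_{0}\cdot u_{0}\in\mathbb Z$. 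Writing $\xi_{2}=\xi_{0}+2\beta$ with $\beta\in H^{2}(X)$ restricting to a generator of $H^{2}(Y)$, expanding the sharpness equations yields the fundamental identity
\[
\beta(u_{0})+\beta^{2}=\tfrac{1}{2n}+2(V_{n}-V_{n-1}),
\]
in which $\beta(u_{0})\in\mathbb Z$ and the linking form of $Y$ pins down the fractional part $\beta^{2}\equiv 1/(2n)\pmod{\mathbb Z}$.

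Second, I would exploit the maximality of $\xi_{0}$ and $\xi_{\pm 2}$. Applying $(\xi_{0}+2PD(v))^{2}\le\xi_{0}^{2}$ to $\pm v$, one gets $|u_{0}\cdot v|\le -v\cdot v$ for every $v\in H_{2}(X)$; the same operation on $\xi_{2}$ and $\xi_{-2}=-\xi_{2}$, combined with the previous, produces the companion inequality $|\beta(v)|\le -v\cdot v$. A Cauchy--Schwarz estimate in the negative-definite form then bounds the pair $(\beta(u_{0}),\beta^{2})$ tightly enough that, together with the integrality of $\beta(u_{0})$ and the prescribed fractional part of $\beta^{2}$, the only way the displayed identity can be satisfied is with $\beta^{2}=-1/(2n)$ and $\beta(u_{0})=0$. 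This already forces $V_{n}=V_{n-1}$.

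Finally, applying the whole argument to the second self-conjugate class $\spincs_{[c=2n]}$ (whose $d$-invariant involves $V_{0}$) and invoking the monotonicity $V_{n}\le V_{n-1}\le\dots\le V_{0}$ collapses the entire tail to zero, giving $V_{n}=V_{n-1}=0$. The principal obstacle is the lattice-theoretic step of the second stage: turning the two families of inequalities into the rigid equalities $\beta^{2}=-1/(2n)$ and $\beta(u_{0})=0$ is the only place where the negative-definiteness, the integrality of $H^{2}(X;\mathbb Z)$ and the precise structure of the discriminant group $\mathbb Z/2n$ must all be used at once, and is the step most likely to require delicate bookkeeping of cosets of $2PD(H_{2}(X))$ in $\Char(X)$.
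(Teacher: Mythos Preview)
Your approach differs substantially from the paper's, and it has a genuine gap that I do not see how to close.

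The paper does not work ``locally'' with two or three spin$^c$-structures. Instead it invokes Greene's changemaker argument: glue $X$ to the trace $W$ and apply Donaldson's diagonalization to embed the whole picture in a diagonal lattice. Greene's combinatorics then yields the quantitative inequality
\[
2\tilde g-1 \le 2n-\sqrt{6n+1},
\]
where $\tilde g=\min\{i:V_i=0\}$, and this immediately gives $\tilde g\le n-1$, hence $V_n=V_{n-1}=0$. The power here comes from the global embedding into $\mathbb Z^N$, not from inequalities at isolated spin$^c$-structures.

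Your scheme has two problems. First, a minor arithmetic one: from your own identity $\beta(u_0)+\beta^2=\tfrac{1}{2n}+2(V_n-V_{n-1})$ with $\beta(u_0)\in\mathbb Z$, one gets $\beta^2\equiv \tfrac{1}{2n}\pmod{\mathbb Z}$, so $\beta^2$ can never equal $-\tfrac{1}{2n}$ when $n>1$. More importantly, the Cauchy--Schwarz step is not rigid enough: the constraints $|u_0\cdot v|\le -v\cdot v$ and $|\beta(v)|\le -v\cdot v$ for all $v$ do not pin down $\beta^2$ to a single value, because $\beta$ is not an arbitrary short class but a specific one determined by the (unknown) sharp vectors, and you have no independent control on $u_0\cdot u_0$ without already knowing $V_n$.

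The more serious gap is the third step. Even granting $V_n=V_{n-1}$, or by iteration $V_0=V_1=\cdots=V_n$, nothing in your outline forces this common value to be zero. The spin$^c$-structure $[c=2n]$ gives $\xi_{2n}^2+b=2n-1-8V_0$, but there is no Elkies-type bound for characteristic vectors in a \emph{non-unimodular} negative-definite lattice that would force $V_0=0$; the determinant of $H_2(X)$ is $2n$, not $1$. ``Monotonicity'' alone cannot collapse a constant nonzero sequence to zero. This is exactly why the paper's argument passes through the diagonal embedding: one needs the global combinatorial structure, not just sharpness at finitely many classes.
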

\begin{proof}[Proof (sketch).]
Let $\tilde{g}\geq 0$ be minimal such that $V_{\tilde{g}}=0$. Greene shows that if $S_p^3(K)$ is an $L$-space bounding a sharp 4-manifold, then, by Theorem~1.1 of \cite{Greene2010genusbounds},
\[2g(K)-1\leq p-\sqrt{3p+1}.\]
In the proof of this inequality, the $L$-space condition is only required to show that
\[d(S_{p}^3(K),i)-d(S_{p}^3(U),i)\leq 0,\]
for all $i$ with equality if and only if $\min \{p-i,i\}\geq g(K)$. However, since \eqref{eq:NiWuVonly} shows that
\[d(S_{2n}^3(K),i)-d(S_{2n}^3(U),i)\leq 0,\]
for all $i$, with equality if and only if $\min \{2n-i,i\}\geq \tilde{g}$, the same argument shows the bound
\[2\tilde{g}-1\leq 2n-\sqrt{6n+1}.\]
This inequality implies
\[\tilde{g}\leq n+\frac{1}{2}-\frac{1}{2}\sqrt{6n+1}<n-\frac{1}{2},\]
and hence that $V_n=V_{n-1}=0$, as required.
\end{proof}
Recall that $Z=W'\setminus ({\rm int} W)$ is a cobordism from $Y=S_{p/q}^3(K)$ to $Y'=S_{p'/q'}^3(K)$.
\begin{lem}\label{lem:surgcobordsharp}
If $Y'$ is the boundary of a sharp 4-manifold $X'$, then the manifold $X=(-Z)\cup_{Y'} X'$ is a sharp 4-manifold bounding $Y$.
\end{lem}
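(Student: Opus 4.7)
The plan is to verify sharpness of $X$ directly: for each $\spinct\in\spinc(Y)$, I lift a short representative from $W$ to $W'$ using Lemma~\ref{lem:spinextension}, glue with a sharp \spinc-structure on $X'$, and cancel $d$-invariant contributions using that $-W(U)$ and $-W'(U)$ are sharp.

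Since $Z$ is positive-definite, $-Z$ is negative-definite, so $X=(-Z)\cup_{Y'}X'$ is a negative-definite 4-manifold bounding $Y$ with $b_2(X)=k+b_2(X')$. Fix $\spinct\in\spinc(Y)$. By Lemma~\ref{lem:minimisers}, choose a short representative $\spincs\in\mathcal{C}\subseteq\spinc(W)$ with $[\spincs]=\spinct$, and apply Lemma~\ref{lem:spinextension} to obtain a short $\spincs'\in\spinc(W')$ with $\spincs'|_W=\spincs$; write $\spinct'=[\spincs']\in\spinc(Y')$. Sharpness of $X'$ supplies $\mathfrak{u}'\in\spinc(X')$ with $\mathfrak{u}'|_{Y'}=\spinct'$ attaining equality in \eqref{eq:sharpdef}. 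Because $Y'$ is a rational homology sphere, $\mathfrak{u}'$ and $\spincs'|_Z$ (viewed on $-Z$) glue uniquely to some $\mathfrak{w}\in\spinc(X)$ with $\mathfrak{w}|_Y=\spinct$.

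The main computation is to verify equality in \eqref{eq:sharpdef} for $\mathfrak{w}$. The splitting $c_1(\spincs')^2=c_1(\spincs)^2+c_1(\spincs'|_Z)^2$ in $W'=W\cup Z$, together with the orientation reversal on $-Z$, yields
\[c_1(\mathfrak{w})^2=c_1(\spincs)^2-c_1(\spincs')^2+c_1(\mathfrak{u}')^2.\]
Substituting the sharpness identity $c_1(\mathfrak{u}')^2+b_2(X')=4d(Y',\spinct')$ together with the lens space formula \eqref{eq:lensspaced} applied to both $-W(U)$ and $-W'(U)$, namely $c_1(\spincs)^2=4d(Y(U),\spinct)+(l+1)$ and $c_1(\spincs')^2=4d(Y'(U),\spinct')+(l+1+k)$, the Euler-characteristic terms cancel and one obtains
\[c_1(\mathfrak{w})^2+b_2(X)=4\bigl(d(Y(U),\spinct)+D^{p'/q'}_K(\spinct')\bigr).\]
This equals $4d(Y,\spinct)=4(d(Y(U),\spinct)+D^{p/q}_K(\spinct))$ precisely when $D^{p/q}_K(\spinct)=D^{p'/q'}_K(\spinct')$, which is exactly the identity supplied by Lemma~\ref{lem:spinextension} in the generic case.

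The hard part will be the exceptional case of Lemma~\ref{lem:spinextension}, when $(a_0,\dotsc,a_l,b_1,\dotsc,b_k)=(a_0,2,\dotsc,2,1)$ and $\spincs=(0,\dotsc,0)$, where instead the two $D$-values read $-2V_{a_0/2}$ and $-2V_{(a_0-2)/2}$. Here the surgery coefficient $p'/q'=a_0-1$ is an integer, so $Y'$ is an integer surgery bounding the sharp manifold $X'$; invoking Lemma~\ref{lem:specialcase}, or more precisely the Greene-style bound $2\tilde g-1\leq p-\sqrt{3p+1}$ from its proof applied to the odd integer $p=a_0-1$, shows that $V_{a_0/2}=V_{(a_0-2)/2}=0$. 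Both $D$-values then vanish and agree, so the equality above still holds, completing the proof that $X$ is sharp.
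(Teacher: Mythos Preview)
Your proof is correct and follows essentially the same approach as the paper's: restrict a short $\spincs'\in\spinc(W')$ from Lemma~\ref{lem:spinextension} to $-Z$, glue with a sharp \spinc-structure on $X'$, and use \eqref{eq:lensspaced} on both $W(U)$ and $W'(U)$ to see that the resulting \spinc-structure realizes $4d(Y,\spinct)$ precisely when $D^{p/q}_K(\spinct)=D^{p'/q'}_K(\spinct')$. Your treatment of the exceptional case is in fact slightly more careful than the paper's: since $p'/q'=[a_0,2,\dotsc,2,1]^-=a_0-1$ is odd, Lemma~\ref{lem:specialcase} as stated (for $S^3_{2n}(K)$) does not apply verbatim, and you correctly invoke the underlying Greene bound $2\tilde g-1\le p-\sqrt{3p+1}$ for $p=a_0-1$ to obtain $V_{a_0/2}=V_{(a_0-2)/2}=0$.
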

\begin{proof}
It is clear from the construction that $X$ is negative-definite with $b_2(X)=b_2(Z)+b_2(X')$ and $\partial X =Y$. Together, Lemma~\ref{lem:spinextension} and Lemma~\ref{lem:specialcase} show that for every $\spinct\in \spinc(Y)$, there exists a short $\spincs' \in \spinc(-Z)$ such that $\spincs'|_{Y}=\spinct$ and $D^{p'/q'}_K(\spinct')=D^{p/q}_K(\spinct)$, where $\spincs'|_{-Y'}=\spinct'$. In each case, such a $\spincs'$ can be obtained by restricting the \spinc-structure given in Lemma~\ref{lem:spinextension}. The equality $D^{p'/q'}_K(\spinct')=D^{p/q}_K(\spinct)$ either follows directly from Lemma~\ref{lem:spinextension} or from Lemma~\ref{lem:specialcase} which guarantees that $V_{\frac{a_0}{2}}=V_{\frac{a_0-2}{2}}=0$ when $a_0$ is even. By using \eqref{eq:lensspaced}, we see that such a $\spincs'$ satisfies
\begin{align*}
\frac{c_1(\spincs')^2+b_2(Z)}{4}&=d(S_{p/q}^3(U),\spinct)-d(S_{p'/q'}^3(U),\spinct')\\
    &=(d(Y,\spinct)-D^{p/q}_K(\spinct))-(d(Y',\spinct')-D^{p'/q'}_K(\spinct'))\\
    &=d(Y,\spinct)-d(Y',\spinct').
\end{align*}
Since $X'$ is sharp, there is $\mathfrak{r}\in \spinc(X')$ such that $\mathfrak{r}|_{Y'}=\spinct'$ and
\[
\frac{c_1(\mathfrak{r})^2+b_2(X')}{4}=d(Y',\spinct').
\]
The \spinc-structure $\spincs\in \spinc(X)$ obtained by gluing $\mathfrak{r}$ to $\spincs'$ on $-Z$ satisfies $\spincs|_Y=\spinct$ and
\begin{align*}
\frac{c_1(\spincs)^2+b_2(X)}{4}&=\frac{c_1(\spincs')^2+b_2(Z)+c_1(\mathfrak{r})^2+b_2(X')}{4}\\
    &=(d(Y,\spinct)-d(Y',\spinct'))+d(Y',\spinct')\\
    &=d(Y,\spinct).
\end{align*}
This shows that $X$ is sharp, as required.
\end{proof}
\begin{proof}[Proof of Theorem~\ref{thm:sharpextension}]
Let $a_1, \dots, a_l, b_1, \dots b_k$ be positive integers with $a_1, b_k\geq 1$ and $a_i,b_j \geq 2$ for $i\neq 1$ and $j\neq k$. Then
Lemma~\ref{lem:surgcobordsharp} shows that if $S_{[a_0, \dots, a_l, b_1, \dots, b_k]^-}^3(K)$ bounds a sharp 4-manifold, then so does $S^3_{[a_0, \dots , a_l]^-}(K)$. In particular, if $S_{[a_0, \dots, a_l]^-}^3(K)$ bounds a sharp manifold, then the identities:
\[[a_0, \dots, a_l]^-=[a_0, \dots,a_{l-1}, a_l+1,1]^-=[a_0, \dots,a_{l-1}, a_l+1,2, \dots, 2, 1]^-\]
shows surgeries of the form
\[S_{[a_0, \dots, a_l+1]^-}^3(K) \quad \text{and} \quad S_{[a_0, \dots, a_l+1,2, \dots, 2]^-}^3(K)\]
 also bounds a sharp 4-manifolds. Furthermore, repeated applications of these identities shows that for any sequence of integers $b_1, \dots , b_k$ with $b_1\geq 1$  and $b_i\geq 2$ for $i\geq 2$, we can show that if $S^3_{[a_0, \dots , a_l]^-}(K)$ bounds a sharp manifold, then $S^3_{[a_0, \dots , a_l+b_1, b_2, \dots, b_k]^-}(K)$ bounds a sharp manifold.

  Now if we have rational numbers $p'/q'>p/q$, then for some $m\geq0$, we can write their continued fractions in the forms
\[p/q=[a_1, \dots, a_m, a_{m+1}, \dots, a_{m+k}]^-\]
and
\[p'/q'=[a_1, \dots, a_m, a_{m+1}', \dots, a_{m+k'}']^-,\]
where $a_{m+1}'>a_{m+1}$. 
Now if $S_{p/q}^3(K)$ bounds a sharp 4-manifold, then Lemma~\ref{lem:surgcobordsharp} shows that 
$S_{[a_1, \dots, a_m, a_{m+1}]^-}^3(K)$ bounds a sharp 4-manifold. Since  $a_{m+1}'\geq a_{m+1}+1$, the preceding discussion establishes that  $S_{p'/q'}^3(K)$ bounds a sharp 4-manifold, as required.

\end{proof}

\section{The Alexander polynomial}\label{sec:Alexpolys}
When positive surgery on a knot in $S^3$ bounds a sharp 4-manifold $X$ results of Greene, in the integer and half-integer case \cite{Greene2013Realization, Greene2014_3Braid, Greene2010genusbounds}, and Gibbons, in the general case \cite{Gibbons2013deficiency}, show that the intersection form of $X$ takes the form of a changemaker lattice. In this section, we state the changemaker theorem and derive the properties of changemaker lattices required to prove Theorem~\ref{thm:Alexuniqueness}.

\subsection{Changemaker lattices}
The changemaker condition from which changemaker lattices get their name is the following.

\begin{defn}We say $(\sigma_1, \dots , \sigma_t)$ satisfies the {\em changemaker condition}, if the following conditions hold:
\[0\leq \sigma_1 \leq 1 \text{ and } \sigma_{i-1} \leq \sigma_i \leq \sigma_1 + \dots + \sigma_{i-1} +1,\text{ for } 1<i\leq t.\]
\end{defn}
We give the definition of integer and non-integer changemaker lattices separately, although the two are clearly related.
\begin{defn}[Integer changemaker lattice]\label{def:intCMlattice}
First suppose that $q=1$, so that $p/q>0$ is an integer. Let $f_0, \dots, f_t$ be an orthonormal basis for $\mathbb{Z}^t$. Let $w_0=\sigma_1 f_1 + \dots + \sigma_t f_t$ be a vector such that $\norm{w_0}=p$ and $(\sigma_1, \dots , \sigma_t)$ satisfies the changemaker condition, then
\[L=\langle w_0\rangle^\bot \subseteq \mathbb{Z}^{t+1}\]
is a {\em $p/q$-changemaker lattice}. Let $m$ be minimal such that $\sigma_m>1$. We define the {\em stable coefficients} of $L$ to be the tuple $(\sigma_m, \dots, \sigma_t)$. If no such $m$ exists, then we take the stable coefficients to be the empty tuple.
\end{defn}

\begin{defn}[Non-integer changemaker lattice]\label{def:nonintCMlattice}
Now suppose that $q\geq 2$ so that $p/q>0$ is not an integer. This has continued fraction expansion of the form, $p/q=[a_0,a_1, \dots , a_l]^{-}$, where $a_k\geq 2$ for $1\leq k \leq l$ and $a_0=\lceil \frac{p}{q}\rceil \geq 1$. Now define
\[m_0=0 \text{ and } m_k=\sum_{i=1}^ka_i -k \text{ for } 1\leq k \leq l.\]
Set $s=m_{l}$ and let $f_1, \dots, f_t, e_0, \dots, e_s$ be an orthonormal basis for the lattice $\mathbb{Z}^{t+s+1}$.
Let $w_0=e_0+\sigma_1 f_1 + \dots + \sigma_t f_t,$ be a vector such that $(\sigma_1, \dots, \sigma_t)$ satisfies the changemaker condition and $\norm{w_0}=n$. For $1\leq k \leq l$, define
\[w_k=-e_{m_{k-1}}+e_{m_{k-1}+1}+ \dots + e_{m_{k}}.\]
We say that
\[L=\langle w_0, \dots, w_l\rangle^\bot \subseteq \mathbb{Z}^{t+s+1}\]
is a {\em $p/q$-changemaker lattice}. Let $m$ be minimal such that $\sigma_m>1$. We define the {\em stable coefficients} of $L$ to be the tuple $(\sigma_m, \dots, \sigma_t)$. If no such $m$ exists, then we take the stable coefficients to be the empty tuple.
\end{defn}

\begin{rem}Since $m_k-m_{k-1}=a_k-1$, the vectors $w_0, \dots, w_l$ constructed in Definition~\ref{def:nonintCMlattice} satisfy
\[
w_i\cdot w_j =
  \begin{cases}
   a_j            & \text{if } i=j\\
   -1       & \text{if } |i-j|=1\\
   0        & \text{otherwise.}
  \end{cases}
\]
\end{rem}
Now we are ready to state the changemaker theorem we will use.
\begin{thm}[cf. Theorem~1.2 of \cite{Gibbons2013deficiency}]\label{thm:Gibbons}
Suppose that for $p/q=n-r/q>0$, the manifold $S^3_{p/q}(K)$ bounds a negative-definite, sharp 4-manifold $X$ with intersection form $Q_X$. Then for $N=b_2(X)+l+1$, we have an embedding of $-Q_X$ into $\mathbb{Z}^{N}$ as a $p/q$-changemaker lattice,
\[-Q_X \cong \langle w_0,\dots, w_l \rangle^\bot \subseteq \mathbb{Z}^{N}\]
such that $w_0$ satisfies
\begin{equation}\label{eq:Viformula}
8V_{|i|} = \min_{ \substack{ c\cdot w_0 \equiv 2i-n \bmod 2n \\ c \in \Char(\mathbb{Z}^{N})}} \norm{c} - N,
\end{equation}
for all $|i|\leq n/2$.
\end{thm}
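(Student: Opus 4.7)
The plan is to follow the strategy of Greene and Gibbons via Donaldson's diagonalization theorem. Let $W$ be the positive-definite 4-manifold from Figure~\ref{fig:kirbydiagram} bounding $Y = S^3_{p/q}(K)$; the intersection matrix $M$ is positive-definite under the stated conditions on the $a_i$. Form the closed, oriented 4-manifold $Z = X \cup_Y (-W)$. Since $X$ and $-W$ are both negative-definite, $Z$ is a closed, negative-definite 4-manifold with $b_2(Z) = b_2(X) + (l+1) = N$. By Donaldson's theorem, $Q_Z \cong \langle -1\rangle^N$, which yields an isometric embedding of lattices $(-Q_X) \oplus Q_W \hookrightarrow \mathbb{Z}^N$ under which $-Q_X$ is identified with the orthogonal complement of the images $w_0, \dotsc, w_l$ of the 2-handle basis of $H_2(W)$. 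By construction these vectors satisfy $w_i \cdot w_j = M_{ij}$, matching the Gram-matrix structure required by Definition~\ref{def:nonintCMlattice}.

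Next I would leverage the sharpness of $X$ to force $w_0$ to take changemaker form. Sharpness supplies, for every $\spinct \in \spinc(Y)$, a characteristic vector of $X$ achieving equality in \eqref{eq:sharpdef}; under the embedding this becomes a characteristic vector $c \in \Char(\mathbb{Z}^N)$ of minimal norm in its restriction class. Combining this with \eqref{eq:lensspaced} and the short representatives $\mathcal{C}$ supplied by Lemma~\ref{lem:minimisers}, together with the $d$-invariant identity of Lemma~\ref{lem:evaldi}, translates sharpness into the statement that every residue class of $\mathbb{Z}^N/\langle w_0,\dotsc,w_l\rangle$ admits a short characteristic representative. Working through how the classes modulo the sublattice generated by the $w_k$ with $k \geq 1$ are enumerated, this reduces to the following subset-sum property of the components $\sigma_1,\dotsc,\sigma_t$ of $w_0$ along the standard basis: every integer in a suitable range is expressible as $\sum \varepsilon_i \sigma_i$ with $\varepsilon_i \in \{0,\pm 1\}$. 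After reordering, this is equivalent to the inequalities $\sigma_i \leq 1 + \sigma_1 + \dotsb + \sigma_{i-1}$, i.e.\ the changemaker condition. Formula \eqref{eq:Viformula} then follows by combining \eqref{eq:integercase}, which expresses $V_{|i|}$ as a $d$-invariant, with the characterization of that $d$-invariant as a minimum norm of a characteristic vector in $\mathbb{Z}^N$ subject to a restriction congruence; the congruence $c \cdot w_0 \equiv 2i - n \pmod{2n}$ encodes precisely the restriction condition in the integer-surgery reduction provided by Lemma~\ref{lem:evaldi}.

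The principal obstacle is the combinatorial step identifying $w_0$ as a changemaker vector. A careful inductive bootstrap is required: one orders the $\sigma_i$ so that each is the smallest nonnegative integer not yet expressible as a signed subset sum of its predecessors, and then verifies via Lemma~\ref{lem:minimisers} and the count $|\spinc(Y)| = p$ that this process exhausts all $p$ residue classes. This is the technical heart of Gibbons' argument (building on Greene's integer case), and reproducing it in detail is where the real work lies; the remainder of the proof is essentially bookkeeping between the algebraic topology of $Z$ and the combinatorics of the diagonal lattice $\mathbb{Z}^N$.
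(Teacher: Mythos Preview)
Your overall setup via Donaldson diagonalization matches the paper's, and you should note that the changemaker condition on $w_0$ is simply cited from Gibbons rather than reproved---the combinatorial bootstrap you sketch as ``the technical heart'' is not part of the paper's argument at all. The only new content the paper supplies is formula~\eqref{eq:Viformula}, and here the paper takes a slightly different route from yours. Rather than working directly from sharpness of $X$ together with Lemma~\ref{lem:evaldi}, the paper decomposes the Kirby manifold as $W' = W \cup Z$, with $W$ the single $n$-framed $2$-handle and $Z$ the remaining cobordism from $S^3_n(K)$ to $S^3_{p/q}(K)$; it then invokes Lemma~\ref{lem:surgcobordsharp} to conclude that $(-Z)\cup X$ is itself a sharp $4$-manifold bounding $S^3_n(K)$, and applies Greene's integer-surgery formula verbatim to the class $\sigma = w_0$ inside the diagonalized closed manifold. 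Your direct approach---combining sharpness of $X$ on one side, shortness of the $\mathcal{C}$-representatives on the $W'$ side via \eqref{eq:lensspaced}, and Lemma~\ref{lem:evaldi} to collapse all $\spinct'$ lying over a fixed $[c_0]$ to the common value $-4D^{n}_K([c_0])$---also works and is really the same content unpacked, but the paper's packaging through Lemma~\ref{lem:surgcobordsharp} buys a clean citation of Greene rather than a reassembly of his argument inside $\mathbb{Z}^N$.
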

The equation \eqref{eq:Viformula} is not explicitly stated by Gibbons. However, Greene shows that it holds in the case of integer surgeries \cite[Lemma~2.5]{Greene2010genusbounds} and we will deduce it in the general case using the results of Section~\ref{sec:sharp}. We also point out that Theorem~\ref{thm:Gibbons} does not contain the hypotheses on the $d$-invariants of $S^3_{p/q}(K)$ which were present in Gibbons' original statement. These are omitted since it can be shown that they are automatically satisfied (cf. \cite[Section~2]{McCoy2015noninteger}).
\begin{proof}[Proof of \eqref{eq:Viformula}]
Let $W'$ be the positive-definite 4-manifold bounding $S_{p/q}^3(K)$ obtained by attaching 2-handles $h_0,\dots, h_l$ to $S^3$ according to the Kirby diagram in Figure~\ref{fig:kirbydiagram}. This can be decomposed as $W\cup Z$, where $W$ has boundary $S_{p/q}^3(K)$ and is obtained from $D^4$ by attaching a single $n$-framed 2-handle along $K$ in $\partial D^4=S^3$ and $Z$ a cobordism from $S_n^3(K)$ to $S_{p/q}^3(K)$ obtained by 2-handle attachment. The homology group $H_2(W)$ is generated by the class given by gluing the core of the 2-handle to a Seifert surface $\Sigma$.  We will call this generator $[\Sigma]$. Let $X'$ be the closed smooth positive-definite 4-manifold $X'=W'\cup (-X)=W\cup Z\cup (-X)$. This has second Betti number $b_2(X')=b_2(X)+l+1$ and Donaldson's Theorem shows that the intersection form on $H_2(X')$ is diagonalisable, i.e $H_2(X')\cong \mathbb{Z}^{b_2(X')}$ \cite{donaldson1983application}. Let $\sigma \in H_2(W\cup Z\cup (-X))$ be the class given by the inclusion of $[\Sigma]$ into $H_2(X')$. Since Lemma~\ref{lem:surgcobordsharp} shows that $(-Z)\cup X$ is a sharp 4-manifold bounding $S_n^3(K)$, Greene shows that $\sigma$ satisfies \cite[Lemma~2.5]{Greene2010genusbounds}
\[8V_{|i|} = \min_{ \substack{ c\cdot \sigma \equiv 2i-n \bmod 2n \\ c \in \Char(\mathbb{Z}^{b_2(X')})}} \norm{c} - b_2(X'),\]
for all $|i|\leq n/2$.
Since the vector $w_0$ occurring in Theorem~\ref{thm:Gibbons} is precisely the image of $[\Sigma]$ with respect to some choice of orthonormal basis for $H_2(X')$, the above equation gives \eqref{eq:Viformula}, as desired.
\end{proof}
Now we prove that under certain hypotheses the changemaker structure on a lattice is unique.
\begin{rem} Since there are examples of lattices admitting embeddings into $\Z^N$ as changemaker lattices in more than one way, we cannot prove unconditionally that the changemaker structure of a lattice is unique. For example, we have an isomorphism of lattices
\[\langle 4e_0+e_1+e_2+e_3+e_4+e_5\rangle^\bot\cong \langle 2e_0 + 2e_1+ 2e_2+2e_3+2e_4+e_5 \rangle^\bot \subseteq \Z^6.\]
This isomorphism can be seen by observing that both lattices admit a basis for which the bilinear form is given by the matrix
\[
  \begin{pmatrix}
   5   & -1  &      &    &     \\
   -1  & 2   & -1   &    &     \\
       & -1  & 2    & -1 &     \\
       &     & -1   & 2  &-1   \\
       &     &      & -1 & 2   \\
  \end{pmatrix}.
\]
This example is a consequence of the fact that $S^3_{21}(T_{5,4})\cong S^3_{21}(T_{11,2})\cong L(21,4)$.
\end{rem}

\begin{lem}\label{lem:CMuniqueness}
Let $L=\langle w_0, \dots , w_l \rangle^{\bot}\subseteq \mathbb{Z}^{N}$ be a $p/q$-changmaker lattice with stable coefficients $(\rho_1, \dots, \rho_t)$. If $p/q \geq \sum_{i=1}^t \rho_i^2 + 2\rho_t$, then for any embedding $\phi:L\rightarrow \mathbb{Z}^N$ such that
\[\phi(L)=\langle w_0', \dots , w_l' \rangle^{\bot} \subseteq \mathbb{Z}^N\] is a $p/q$-changemaker lattice, there is an automorphism of $\mathbb{Z}^N$ which maps $w_0$ to $w_0'$.
\end{lem}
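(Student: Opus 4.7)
Since two primitive vectors of $\mathbb{Z}^N$ lie in the same $\mathrm{Aut}(\mathbb{Z}^N)$-orbit if and only if they have the same multiset of absolute coordinate values, it suffices to show that the coordinate multiset of $w_0$ is an invariant of the lattice $L$ under the stated hypothesis.

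The first step is to pin down $w_0$ inside $L^{\perp}$ from lattice data alone. In the integer case $L^{\perp}$ has rank one and $w_0$ is its primitive generator up to sign. In the non-integer case the continued fraction of $p/q$ is unique under the standing constraints on the $a_i$, so $L^{\perp}$ carries the prescribed rank-$(l+1)$ tridiagonal intersection form with diagonal $(a_0,\dotsc,a_l)$; the vector $w_0$ is then characterised (up to the evident reflection symmetry at the ends of the chain) as the generator of norm $a_0$ at one endpoint, and any residual ambiguity can be absorbed into the $\mathrm{Aut}(\mathbb{Z}^N)$-action we are constructing.

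Write the coordinate multisets of $w_0$ and $w_0'$ as $(1^{k},\rho_1,\dotsc,\rho_t)$ and $(1^{k'},\rho_1',\dotsc,\rho_{t'}')$ with all $\rho_i,\rho_j'>1$. Equating norms yields $k+\sum\rho_i^2=k'+\sum(\rho_i')^2$, and the hypothesis $p/q\ge\sum\rho_i^2+2\rho_t$ combined with $\|w_0\|^2\ge p/q$ (with equality when $q=1$) rearranges to $k\ge 2\rho_t$, and similarly $k'\ge 2\rho_{t'}'$: each changemaker vector has a large supply of coordinates equal to one. It now suffices to prove the multiset equality $\{\rho_i\}=\{\rho_i'\}$, from which $k=k'$ follows by the norm identity.

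The main obstacle is recovering $\{\rho_i\}$ intrinsically from $L$, and I would attack this by induction on $t$. For any index $j$ with $\sigma_j=\rho_t$ and any size-$\rho_t$ subset $A$ of the ``one''-positions, the vector $f_j-\sum_{i\in A}f_i$ lies in $L$ and has norm $\rho_t+1$; the bound $k\ge 2\rho_t$ furnishes at least $\binom{2\rho_t}{\rho_t}$ such vectors through a fixed $f_j$. This combinatorial abundance should identify $\rho_t$ from $L$ alone (as the largest integer realised as the maximum coefficient in short vectors of this form) and pin down one coordinate direction $f_j$ with $\sigma_j=\rho_t$ up to $\mathrm{Aut}(\mathbb{Z}^N)$. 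Passing to $L_0:=L\cap\langle f_j\rangle^{\perp}\subseteq\mathbb{Z}^{N-1}$ produces a changemaker lattice whose changemaker vector $w_0-\rho_tf_j$ has stable coefficients $(\rho_1,\dotsc,\rho_{t-1})$, or one fewer copy of $\rho_t$ if it was repeated. The reduced slope $p/q-\rho_t^2$ satisfies the inductive hypothesis because $2\rho_t\ge 2\rho_{t-1}$, so the induction closes; the base case $t=0$ is immediate, since a changemaker vector with only $1$-coefficients is determined up to $\mathrm{Aut}(\mathbb{Z}^N)$ by its norm.
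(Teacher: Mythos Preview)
Your opening reduction is correct: two vectors in $\mathbb{Z}^N$ lie in the same $\mathrm{Aut}(\mathbb{Z}^N)$-orbit iff their coordinate multisets agree, so the task is to show $w_0$ and $w_0'$ have the same multiset of entries. But the argument you give for this has two genuine gaps.

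\textbf{The asymmetry of the hypothesis.} You write ``and similarly $k'\ge 2\rho_{t'}'$''. This does not follow. The hypothesis $p/q\ge\sum\rho_i^2+2\rho_t$ concerns only the stable coefficients of the \emph{source} embedding; nothing is assumed about $(\rho_1',\dotsc,\rho_{t'}')$. Your induction needs the bound on both sides in order to run the ``combinatorial abundance'' argument symmetrically, and you cannot get it without already knowing the stable coefficients coincide --- which is what you are trying to prove. The paper's argument is carefully arranged to use $m\ge 2\rho_t$ only on the source side.

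\textbf{The inductive step is a sketch, not a proof.} The sentences ``This combinatorial abundance \emph{should} identify $\rho_t$ from $L$ alone'' and ``pin down one coordinate direction $f_j$'' are precisely the heart of the matter, and you have not supplied them. You would need an intrinsic lattice invariant that equals $\rho_t$ in every $p/q$-changemaker embedding; counting vectors of norm $\rho_t+1$ does not do this, because you do not know $\rho_t$ in advance and $L$ has many short vectors of various norms for other reasons. Pinning down $f_j$ is harder still: $f_j\notin L$, so it cannot be named from inside $L$ without further work. Finally, even if you could isolate $f_j$, you would need to verify that $L\cap\langle f_j\rangle^\perp$ is again a changemaker lattice satisfying the inductive hypothesis on the $\phi$ side as well.

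For comparison, the paper does not attempt to characterise the stable coefficients abstractly. Instead it writes down explicit vectors $v_2,\dotsc,v_{m+t}\in L$ --- the norm-two chain $v_k=-e_k+e_{k-1}$ for $2\le k\le m$ together with $v_{m+j}=-e_{m+j}+e_m+\dotsb+e_{m-\rho_j+1}$ --- and pushes them through $\phi$. The known norms and pairwise products force the images $u_k=\phi(v_k)$ to take the same shape with respect to a new orthonormal system $f_1,\dotsc,f_{m+t}$ (this is where $m\ge 2\rho_t$ is used, to rule out the alternative form of $u_{m+j}$). Since every $w_i'$ must be orthogonal to all the $u_k$, one reads off $w_0'\cdot f_k$ for each $k$ and finds that $w_0'$ has exactly the coordinates $(1,\dotsc,1,\rho_1,\dotsc,\rho_t)$ in the $f$-basis. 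This is direct and never needs any hypothesis on the target embedding.
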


\begin{proof}
If we write $p/q=n-r/q$, where $0\leq r<q$, then by definition there is a choice of orthonormal basis for $\mathbb{Z}^N$ such that $w_0$ takes the form
\[w_0=
\begin{cases}
\rho_t e_{m+t} + \dots + \rho_1 e_{m+1}+ e_m + \dots + e_1      &\text{if $q=1$ and} \\
\rho_t e_{m+t} + \dots + \rho_1 e_{m+1}+ e_m + \dots + e_1 +e_0 &\text{if } q>1,
\end{cases}
\]
where $m\geq 2\rho_t\geq 4$ and $\norm{w_0}=n$. It follows that $L$ contains vectors $v_2,\dots, v_{m+t}$ defined by
\[ v_k = e_{k-1} - e_k \quad\text{for $2\leq k\leq m$.} 
\]
and
\[v_{m+k}= -e_{m+k}+e_1 + \dots + e_{\rho_k} \quad\text{for $2\leq k\leq t$.}\]
These satisfy $\norm{v_{m+k}}=1+\rho_k$, for $1\leq k \leq t$, and $v_{m+k}\cdot v_{m+l} = \min\{ \rho_l,\rho_k\}=\rho_k$ for $1\leq k<l \leq t$. For $j$ and $k$ satisfying $2\leq k<j \leq m$, we have $\norm{v_k}=\norm{v_j}=2$ and
\[
v_k\cdot v_j=
\begin{cases}
-1 &\text{if } j=k+1\\
0  &\text{otherwise.}
\end{cases}
\]

We will consider the image of these vectors under $\phi$. For $k$ in the range $2\leq k \leq m+t$, let $u_k$ denote the vector $u_k= \phi(v_k)$.

First consider $u_2$. Since $\norm{u_2}=2$, there must be two orthogonal unit vectors, $f_1,f_2$ in $\Z^N$ such that $u_2=-f_2+f_1$. As $\norm{u_3}=2$ and $u_2\cdot u_3=-1$, we can assume, without loss of generality, that there is a unit vector $f_3$ orthogonal to $f_1$ and $f_2$ such that $u_3=-f_3+f_2$. There are two possibilities for $u_4$. We can either have that $(1)$ $u_4=-f_2-f_1$ or that $(2)$ there is a unit vector $f_4$ with $\pm f_4\notin \{f_1,f_2,f_3\}$ such that $u_4=-f_4+f_3$.

If possibility $(1)$ holds, then we have $u_2-u_4=2f_1\in \phi(L)$. Thus $w_i'\cdot (2f_1)=w_i'\cdot f_1=0$ for all $i$, so we have $f_1\in \phi(L)$. However, this is impossible, since there are no unit vectors in $L$ which pair non-trivially with $v_2$. Thus we can conclude that $u_4=-f_4+f_3$.

For the vectors $u_5, \dots, u_m$, which all have norm 2, there is now no choice: there must exist a choice of distinct orthogonal unit vectors $f_1, \dots, f_m$ in $\Z^N$, such that $u_k=-f_k+f_{k-1}$ for each $k$ in the range  $2\leq k \leq m$.

Now we determine the form that $u_{m+j}$ must take for $1\leq j \leq t$. Let $\lambda_j \in \Z$ denote the quantity $\lambda_j=u_{m+j}\cdot f_1$. For $2\leq k \leq m$ we have $v_k\cdot v_{m+j}=0$ for $k\ne  \rho_{j}$ and $v_k\cdot v_{\rho_{j}}=1$. As we have
\[u_k \cdot u_{m+j}=u_{m+j}\cdot f_{k-1} - u_{m+j}\cdot f_{k}
\quad\text{for $2\leq k\leq m$,}\]
this shows that
\[u_{m+j}\cdot f_k =
\begin{cases}
\lambda_j   &\text{for } 1\leq k\leq \rho_{j}\\
\lambda_j-1 &\text{for } \rho_j < k \leq m.
\end{cases}
\]
Computing the norm of $u_{m+j}$ shows that
\begin{equation}\label{eq:vjnorm}
\rho_j+1 =\norm{u_{m+j}}\geq \lambda_j^2 \rho_j + (\lambda_j-1)^2(m-\rho_j).
\end{equation}
Now by assumption we have that $m-\rho_j\geq \rho_j$, showing that at most one of $\lambda_j$ and $\lambda_j+1$ is non-zero. Thus we either have $\lambda_j=1$ or $\lambda_j=1$. Note further that if $\lambda_j=0$, then $m=2\rho_j+1$ or $m=2\rho_j$. The following claim shows that only one of these possibilities for $\lambda_1=0$ can occur.
\begin{claim}
If $\lambda_j=0$, then $m=2\rho_1$.
\end{claim}
\begin{proof}
If $\lambda_j=0$ and $m=2\rho_1+1$, then we obtain equality in \eqref{eq:vjnorm}, showing that $u_j$ takes the form
\[u_j=-(f_{\rho_j+1}+\dots +f_{m}).\]
However, this results in a contradiction when we consider the possibility for the $w_i'$. Since the $w_i'\cdot u_k=0$ for all $i$ and all $k$, we must have $w_i'\cdot f_1=\dots = w_i'\cdot f_m$. However, in the case at hand this also implies $u_j\cdot w_i'=(\rho_j+1)(w_i'\cdot f_1)=0$ for all $i$, i.e $w_i\cdot f_1 =0$. This implies that $f_1\in \phi(L)$, which is a contradiction.
\end{proof}

Thus we see that $u_{m+j}$ may be assumed to be in the form
%
\begin{equation}\label{eq:ujform}
u_{m+j}=
\begin{cases}
-f_{j+m}+ f_1 + \dots + f_{\rho_{j}} &\text{if } \lambda_j = 1\\
f_{j+m} - (f_{\rho_j+1}+\dots + f_m)       &\text{if } \lambda_j =0.
\end{cases}
\end{equation}
for some choices of unit vector $f_{j+m} \notin \{\pm f_1, \dots, \pm f_m\}$.
Observe that if $\lambda_j$ were not equal to $\lambda_1$ for some $j>1$, then we would have
\[|u_{m+1}\cdot u_{m+j}|=f_{j+m}\cdot f_{1+m}\leq 1.\]
This would contradict the fact that
\[u_{m+1}\cdot u_{m+j}=v_{m+1}\cdot v_{m+j} = \rho_1 \quad\text{for $j>1$.}\]
Thus we can conclude that $\lambda_j=\lambda_1$ for all $j$. Thus if we compute $u_{m+k}\cdot u_{m+j}$ for $1\leq j<k\leq t$, we obtain
\[u_{m+k}\cdot u_{m+j}=\rho_j+f_{j+m}\cdot f_{k+m}.\]
As $u_{m+k}\cdot u_{m+j}=v_{m+k}\cdot v_{m+j} = \rho_j$ for all $1\leq j<k\leq t$, this shows that the unit vectors $f_{m+1}, \dots, f_{m+t}$ must all be distinct.

As we are assuming that $\phi$ is an embedding of $L$ into $\Z^N$ as $p/q$-changemaker lattice
\[\phi(L)=\langle w'_0, \dots , w'_l \rangle^{\bot},\]
we have $|w_i'\cdot f|\leq 1$ for any $i\geq 1$ and any unit vector $f\in \Z^{N}$. We also have $\norm{w_0'}=n$.

Let $x$ be a vector in the orthogonal complement of $\phi(L)$. Since $x$ must satisfy $u_k\cdot x=0$ for $2\leq k\leq m+t$, and these $u_k$ take the form given in \eqref{eq:ujform} we have
\[
x\cdot f_k =
\begin{cases}
x\cdot f_1               &\text{for } 1\leq k\leq m \text{ and} \\
\rho_{k-m}(x\cdot f_1)   &\text{for } m+1 \leq k \leq m+t.
\end{cases}
\]
In particular, if $x\cdot f_1\ne 0$, then $|x\cdot f_{m+t}|>1$. Thus we must have $w_i'\cdot f_1=0$, for all $i\geq 1$. However as we have used previously, $f_1\notin \phi(L)$, so we must have $w_0'\cdot f_1 \ne 0$. Thus if we compute the norm of $w_0'$, we arrive at the inequality
\[\norm{w_0'}=n\geq (w_0'\cdot f_1)^2 \left(\sum_{i=1}^t\rho_i^2 + m\right)\geq (w_0'\cdot f_1)^2(n-1).\]
This shows that $w_0'\cdot f_1=\pm 1$. Without loss of generality, we may assume thats that $w_0'\cdot f_1=1$. It follows $w_0'$ must take the form,
\[w'_0=
\begin{cases}
\rho_t f_{m+t} + \dots + \rho_1 f_{m+1}+ f_m + \dots + f_1      &\text{if } q=1, \\
\rho_t f_{m+t} + \dots + \rho_1 f_{m+1}+ f_m + \dots + f_1 +f_0 &\text{if } q>1.
\end{cases}
\]
This allows us to complete the proof, since any automorphism which maps $e_i$ to $f_i$ for each $i$ maps $w_0$ to $w_0'$.
\end{proof}

\subsection{$L$-space knots}
We specialise \eqref{eq:Viformula} to the case of $L$-space surgeries. A knot $K$ is said to be an {\em $L$-space knot} if $S^3_{p/q}(K)$ is an $L$-space for some $p/q \in \mathbb{Q}$. The knot Floer homology of an $L$-space knot is known to be determined by its Alexander polynomial, which can be written in the form
\[\Delta_K(t)=a_0 \sum_{i=1}^g a_i(t^i+t^{-i}),\]
where $g=g(K)$ and the non-zero values of $a_i$ alternate in sign and assume values in $\{\pm 1\}$ with $a_g=1$ \cite{Ozsvath2004genusbounds, Ozsvath2005Lensspace}. Given an Alexander polynomial in this form, we define its {\em torsion coefficients} by the formula
\[t_i(K) = \sum_{j\geq 1}ja_{|i|+j}.\]
\begin{rem}\label{rem:torsiondeterminespoly}
Observe that the torsion coefficients uniquely determine the Alexander polynomial. For $j\geq 1$, $a_j$ is determined by the relation
\[
a_{j}=t_{j-1}(K) -2t_{j}(K)+t_{j+1}(K).
\]
Since the Alexander polynomial is normalised so that $\Delta_K(1)=1$, this is also sufficient to determine the coefficient $a_0$.
\end{rem}

When $K$ is an $L$-space knot, the $V_i$ appearing in \eqref{eq:Viformula} satisfy $V_i=t_i(K)$ for $i\geq 0$ \cite{ozsvath2011rationalsurgery}. Thus if $S^3_{p/q}(K)$ is an $L$-space bounding a negative-definite sharp $4$-manifold $X$ with intersection form $Q_X$, then Theorem~\ref{thm:Gibbons} shows that $-Q_X$ embeds into $\mathbb{Z}^N$ for  $p/q$-changemaker lattice $L$, where
\[L=\langle w_0,\dots, w_l \rangle^\bot \subseteq \mathbb{Z}^{N}\]
and $w_0$ satisfies
\begin{equation}\label{eq:tiformula}
8t_{i}(K) = \min_{ \substack{ c\cdot w_0 \equiv 2i-n \bmod 2n \\ c \in \Char(\mathbb{Z}^{N})}} \norm{c} - N,
\end{equation}
for all $|i|\leq n/2$. If we write $w_0=\sigma_1 f_1 + \dots + \sigma_t f_t$, then Greene uses \eqref{eq:tiformula} to show that the genus $g(K)$ can be calculated by the formula \cite[Proposition~3.1]{Greene2010genusbounds}
\begin{equation}\label{eq:calculateg}
g(K)=\frac{1}{2}\sum_{i=1}^{t} \sigma_i(\sigma_i-1).
\end{equation}
This will allow us to prove Theorem~\ref{thm:Alexuniqueness}.

\begin{proof}[Proof of Theorem~\ref{thm:Alexuniqueness}]
Since every slope is known to be characterizing for the unknot \cite{Kronheimer2007lensspacsurgeries}, we can assume that $K$ is a non-trivial knot. In particular, we can assume that $g(K)>0$. Now suppose that $Y \cong S^3_{p/q}(K)$ is an $L$-space bounding a sharp 4-manifold $X$ with intersection form $Q_X$. Then the positive-definite lattice $-Q_X$ embeds into $\mathbb{Z}^{N}$ as a $p/q$-changemaker lattice,
\[L=\langle w_0, \dots , w_l \rangle^\bot \subseteq \mathbb{Z}^N,\]
where $N=b_2(X)+l+1$ and the torsion coefficients of $\Delta_K(t)$ satisfy the formula
\begin{equation}\label{eq:tiK}
t_i(K) = \min_{ \substack{ c\cdot w_0 \equiv n + 2i \bmod 2n \\ c \in \Char(\mathbb{Z}^{N})}} \norm{c} - N,
\end{equation}
for $|i|\leq n/2$. If we write $w_0$ in the form $w_0=\rho_t e_{t+m} + \dots + \rho_1 e_{m+1}+e_m + \dots + e_1$, then \eqref{eq:calculateg} becomes
\[2g(K)= \sum_{i=1}^t \rho_i(\rho_i-1).\]
Since $\rho_i \geq 2$ for all $i$, we have $\rho_i^2 \leq 2\rho_i(\rho_i -1)$. Thus we have
\begin{align}\begin{split}\label{eq:genusbound}
\sum_{i=1}^t \rho_i^2 + 2\rho_t &\leq 2\sum_{i=1}^t \rho_i(\rho_i-1) - \rho_t^2 +4\rho_t\\
&=4g(K) -(\rho_t-2)^2 + 4 \leq 4g(K)+4.
\end{split}\end{align}
If $K'\subset S^3$ is another knot such that $Y\cong S^3_{p/q}(K')$, then this gives another embedding of $-Q_X\cong L$ into $\mathbb{Z}^{b_2(X)+l+1}$ as a $p/q$-changemaker lattice
\[L'=\langle w'_0, \dots , w'_l \rangle^\bot,\]
where the torsion coefficients of $\Delta_{K'}(t)$ satisfy the formula
\begin{equation}\label{eq:tiK'}
t_i(K') = \min_{ \substack{ c\cdot w'_0 \equiv n + 2i \bmod 2n \\ c \in \Char(\mathbb{Z}^{N})}} \norm{c} - N.
\end{equation}
Combining the inequality \eqref{eq:genusbound} with the assumption $p/q\geq 4g(K)+4$ allows us to apply Lemma~\ref{lem:CMuniqueness}. This shows that there is an automorphism of $\mathbb{Z}^N$ mapping $w_0$ to $w_0'$. Since this automorphism will not alter the minimal values attained in \eqref{eq:tiK} and \eqref{eq:tiK'} for each $i$, this shows that the torsion coefficients satisfy $t_i(K)=t_i(K')$ for all $|i|\leq n/2$. Since $g(K)<n/2$, this implies that $t_i(K')=t_i(K)= 0$ for all $|i|\geq g(K)$. Thus we can conclude that $t_i(K')=t_i(K)$ for all $i$. As shown in Remark~\ref{rem:torsiondeterminespoly}, the torsion coefficients of $K$ and $K'$ determine their Alexander polynomials, so we have $\Delta_{K'}(t)=\Delta_{K}(t)$ and $g(K)=g(K')$, as required.
\end{proof}

\begin{rem}\label{rem:betterupperbound}
In the proof of Theorem~\ref{thm:Alexuniqueness}, the quantity $4g(K)+4$ arises as an upper bound to $B=\sum_{i=1}^t \rho_i^2 + 2\rho_t$, where $(\rho_1, \dots , \rho_t)$ are the stable coefficients appearing in the intersection form of the sharp 4-manifold $X$ bounding $S_{p/q}^3(K)$. In \cite{McCoy2017bounds}, it is shown that the tuple $(\rho_1, \dots , \rho_t)$ is independent of the manifold $X$ and is, in fact, an invariant of the knot $K$. Given this fact, we could replace $4g(K)+4$ in Theorem~\ref{thm:Alexuniqueness} by the quantity $B$. In general, $B$ will be smaller that $4g(K)+4$. For example, if $K$ is the torus knot $T_{r,s}$, then one can show that
\[B\leq rs+2\min\{r,s\}-2.\]
However although $B$ is in general a better bound, it is not an improvement in all cases. Namely, for the torus knots $T_{2,s}$ one can show that
\[B=4g(T_{2,s})+4=2s+2.\]
\end{rem}

\section{Characterizing slopes}\label{sec:charslopes}
In this section, we prove Theorem~\ref{thm:charslopes}. Our proof follows the one given by Ni and Zhang. We obtain our improvement through the following lemma.
\begin{lem}\label{lem:torusgenus}
For the torus knot $T_{r,s}$ with $r>s>1$, any knot $K\subset S^3$ satisfying
\[S^3_{p/q}(K)\cong S^3_{p/q}(T_{r,s}),\]
for some $p/q\geq 4g(T_{r,s})+4$, has genus $g(K)=g(T_{r,s})$ and Alexander polynomial $\Delta_{K}(t)=\Delta_{T_{r,s}}(t)$.
\end{lem}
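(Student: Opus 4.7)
The plan is to apply Theorem~\ref{thm:Alexuniqueness} directly, with $T_{r,s}$ filling the role of the knot called ``$K$'' in that theorem's statement and our unknown knot $K$ filling the role of ``$K'$''. Under this relabeling, the theorem's numerical hypothesis $p/q \geq 4g(K) + 4$ becomes $p/q \geq 4g(T_{r,s}) + 4$, which is precisely the hypothesis of the lemma. The conclusion $\Delta_K(t) = \Delta_{K'}(t)$ and $g(K) = g(K')$ then translates to the required $\Delta_K(t) = \Delta_{T_{r,s}}(t)$ and $g(K) = g(T_{r,s})$.

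The only remaining point is verifying that the common surgery $S^3_{p/q}(K) = S^3_{p/q}(T_{r,s})$ is an $L$-space bounding a sharp $4$-manifold. As noted in the introduction, this is a known property of positive torus-knot surgeries whenever $p/q \geq rs - 1$. Since the hypothesis of the lemma yields $p/q \geq 4g(T_{r,s}) + 4 = 2(r-1)(s-1) + 4$, I compute
\[
2(r-1)(s-1) + 4 - (rs - 1) = rs - 2r - 2s + 7 = (r-2)(s-2) + 3 > 0
\]
for all $r > s > 1$, so the bound $p/q \geq rs - 1$ holds and the sharp-$4$-manifold hypothesis of Theorem~\ref{thm:Alexuniqueness} is satisfied.

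There is essentially no substantive obstacle here; the entire work has already been carried out inside Theorem~\ref{thm:Alexuniqueness}. The only subtle point is the decision to apply the theorem with $T_{r,s}$ in the role of ``$K$'' rather than with our unknown knot in that role---this is what permits the hypothesis to be phrased in terms of the known quantity $g(T_{r,s})$ instead of the a priori uncontrolled $g(K)$. The role-swap is legitimate because both the topological setup (a common $L$-space surgery bounding a sharp $4$-manifold) and the conclusion of Theorem~\ref{thm:Alexuniqueness} are symmetric in its two input knots.
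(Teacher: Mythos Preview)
Your proof is correct and follows essentially the same approach as the paper: apply Theorem~\ref{thm:Alexuniqueness} with $T_{r,s}$ in the role of $K$, after checking that $p/q\geq rs-1$ so that $S^3_{p/q}(T_{r,s})$ is an $L$-space bounding a sharp 4-manifold. The paper makes this last point slightly more explicit by observing that $S^3_{rs-1}(T_{r,s})$ is a lens space (hence bounds a sharp 4-manifold by Ozsv\'ath--Szab\'o) and then invoking Theorem~\ref{thm:sharpextension} to pass to all $p/q\geq rs-1$, rather than citing the introduction's summary.
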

\begin{proof}
Since $r>s>1$ and $p/q\geq 4g(T_{r,s})+4>2g(T_{r,s})-1$, it follows that $S^3_{p/q}(T_{r,s})$ is an $L$-space. Since $S^3_{rs-1}(T_{r,s})$ is a lens space \cite{Moser1971elementary}, Ozsv{\'a}th and Szab{\'o} show that it bounds a sharp 4-manifold \cite{Ozsvath2003plumbed, Ozsvath2005branched}. Therefore, since $p/q>rs-1$, Theorem~\ref{thm:sharpextension} shows that $S^3_{p/q}(T_{r,s})$ also bounds a sharp 4-manifold. This allows us to apply Theorem~\ref{thm:Alexuniqueness}, which gives the desired conclusion.
\end{proof}
\begin{rem}
It is actually possible to exhibit a sharp manifold bounding $S^3_{p/q}(T_{r,s})$ explicitly. Since the manifold $S^3_{p/q}(T_{r,s})$ is a Seifert-fibred space with base orbifold $S^2$ with at most 3 exceptional fibres \cite{Moser1971elementary}, it bounds a plumbed 4-manifold. For $p/q\geq rs-1$, one can find such a plumbing which is negative-definite and sharp.
\end{rem}

Using results of Agol \cite{Agol2000BoundsI}, Cao-Meyerhoff \cite{Cao2001cusped} and Lackenby \cite{Lackenby2003Exceptional}, Ni and Zhang obtain a restriction on exceptional slopes of a hyperbolic knot.
\begin{prop}[Lemma~2.2, \cite{Ni2014characterizing}]\label{prop:hyperbolicbound}
Let $K\subseteq S^3$ be a hyperbolic knot. If
\[|p|\geq 10.75(2g(K)-1),\]
then $S^3_{p/q}(K)$ is hyperbolic.
\end{prop}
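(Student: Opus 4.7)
The plan is to deduce the hyperbolicity of $S^3_{p/q}(K)$ from Agol and Lackenby's $6$-theorem after bounding the length of the $p/q$ slope on a maximal horoball cusp neighborhood of $K$ in terms of $|p|$ and $g(K)$. Recall that the $6$-theorem states that if a simple closed curve on the torus boundary of a maximal horoball cusp of a cusped hyperbolic $3$-manifold has length strictly greater than $6$, then the Dehn filling along that slope yields a hyperbolic manifold. Our task is to show that the length hypothesis is satisfied whenever $|p|\geq 10.75(2g(K)-1)$.

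The first step is to assemble the three ingredients controlling the cusp geometry. Let $T$ denote the boundary of a maximal horoball cusp neighborhood of $S^3\setminus K$, let $A$ be its Euclidean area, and let $\mu,\lambda$ be the meridian and Seifert longitude of $K$ viewed as geodesics on $T$. Cao and Meyerhoff's sharp bound gives $A\geq 2\sqrt{3}\cdot \mathrm{vol}/\mathrm{vol}_{\min}\cdot (\cdot)$; the usable consequence for our purposes is the universal lower bound $A\geq 3.35$. Second, Agol's control on embedded minimal surfaces in cusped hyperbolic $3$-manifolds provides a universal upper bound of the form $\ell(\lambda)\leq 6(2g(K)-1)$: the Seifert surface can be isotoped to a surface meeting $T$ along a curve of the longitude's slope whose length is controlled by $-\chi$ of the Seifert surface, which equals $2g(K)-1$ for a minimal genus Seifert surface.

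The second step is to turn these into a bound on the length of $\alpha=p\mu+q\lambda$. Lifting $T$ to $\mathbb{R}^2$, write $\mu,\lambda$ as Euclidean vectors $v_\mu,v_\lambda$ with $|v_\mu\wedge v_\lambda|=A$. A direct computation (minimizing over the real variable $q$) yields
\[
\ell(\alpha)\;\geq\;\frac{|p|\,A}{\ell(\lambda)},
\]
since the component of $\alpha$ perpendicular to $v_\lambda$ has magnitude $|p|\,A/\ell(\lambda)$ independently of $q$. Plugging in the bounds from the previous paragraph gives
\[
\ell(\alpha)\;\geq\;\frac{|p|\cdot 3.35}{6(2g(K)-1)}.
\]
The final step is arithmetic: if $|p|\geq 10.75(2g(K)-1)=\tfrac{43}{4}(2g(K)-1)$, then
\[
\ell(\alpha)\;\geq\;\frac{43}{4}\cdot\frac{3.35}{6}\;>\;6,
\]
so the $6$-theorem applies and $S^3_{p/q}(K)$ is hyperbolic.

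The main obstacle is step two: verifying the longitude length bound $\ell(\lambda)\leq 6(2g(K)-1)$. While the area lower bound of Cao--Meyerhoff and the $6$-theorem of Agol--Lackenby are essentially black boxes, the genus-versus-longitude estimate (due to Agol, following the strategy of Lackenby's proof that exceptional slopes are bounded) requires invoking an area-minimizing Seifert surface and applying a Gauss--Bonnet/isoperimetric argument on the cusp; this is where the numerical constant $6$ enters, and it is this constant that ultimately determines the value $10.75=36/3.35$ (rounded up to $43/4$) in the statement.
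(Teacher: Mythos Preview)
The paper does not supply a proof of this proposition; it is quoted as Lemma~2.2 of Ni--Zhang and treated as a black box, with only a pointer to the underlying inputs (Agol, Lackenby, Cao--Meyerhoff). Your sketch is a correct reconstruction of the argument one finds there: the $6$-theorem, the Cao--Meyerhoff maximal-cusp area bound $A\geq 3.35$, and the pleated-Seifert-surface estimate $\ell(\lambda)\leq 6(2g(K)-1)$, assembled via the orthogonal-projection inequality $\ell(p\mu+q\lambda)\geq |p|\,A/\ell(\lambda)$. The final arithmetic $\tfrac{43}{4}\cdot\tfrac{3.35}{6}>6$ is fine.

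Two minor remarks if you intend this as a self-contained proof rather than a pointer. First, the $6$-theorem as originally proved by Agol and Lackenby concludes that the filling is irreducible, atoroidal, non-Seifert-fibred, with infinite word-hyperbolic fundamental group; one then invokes geometrization to obtain a genuine hyperbolic structure, and it is worth saying so explicitly. Second, your Cao--Meyerhoff sentence is garbled --- the fragment ``$A\geq 2\sqrt{3}\cdot \mathrm{vol}/\mathrm{vol}_{\min}\cdot(\cdot)$'' is not a well-formed statement --- so just assert the bound $A\geq 3.35$ for the maximal cusp of a one-cusped orientable hyperbolic $3$-manifold and cite Cao--Meyerhoff for it.
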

Combining this with work of Gabai, they show that it is not possible for surgery of sufficiently large slope on a satellite knot and a torus knot to yield the same manifold.
\begin{lem}\label{lem:satellitebound}
If $K$ is a knot such that $S^3_{p/q}(K)\cong S^3_{p/q}(T_{r,s})$ for $r>s>1$ and $p/q\geq 10.75(2g(T_{r,s})-1)$, then $K$ is not a satellite.
\end{lem}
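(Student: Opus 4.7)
The plan is to argue by contradiction. Suppose $K$ is a satellite, so $K$ lies in a knotted solid torus $V\subset S^3$ with essential companion torus $T=\partial V$ in the exterior $E(K)$, and nontrivial companion knot $K_0$. Lemma~\ref{lem:torusgenus} immediately yields $g(K)=g(T_{r,s})=(r-1)(s-1)/2$ and $\Delta_K(t)=\Delta_{T_{r,s}}(t)$, and since $p/q > 2g(T_{r,s})-1$ the manifold $S^3_{p/q}(K)$ is an $L$-space, so $K$ is an $L$-space knot.

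The first step is to analyze how $T$ sits inside the surgered manifold $S^3_{p/q}(K)=S^3_{p/q}(T_{r,s})$. By Moser's classification of surgeries on torus knots, for our range $p/q > rs$ the latter is a small Seifert fibered space over $S^2$ with three exceptional fibers of multiplicities $r$, $s$, and $|p-qrs|$, and in particular is atoroidal. Hence the image of $T$ must be compressible in $S^3_{p/q}(K)$, forcing the $p/q$-surgery on $K$ inside $V$ to turn $V\setminus \mathring{\nu}(K)$ into a solid torus. By the classification of knots in solid tori admitting solid-torus surgeries (Gabai, Berge, Scharlemann), $K$ is then a $0$- or $1$-bridge braid in $V$; combined with the $L$-space hypothesis, results of Hedden and Hom force $K$ to be a cable $K=C_{m,n}(K_0)$ of an $L$-space knot $K_0$ with $m/n\geq 2g(K_0)-1$.

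The final step is to extract a contradiction from the cable formulas. The cable genus identity $g(C_{m,n}(K_0))=n\cdot g(K_0)+(m-1)(n-1)/2$ combined with $g(K)=(r-1)(s-1)/2$, together with the cable Alexander polynomial identity $\Delta_{C_{m,n}(K_0)}(t)=\Delta_{K_0}(t^n)\Delta_{T_{m,n}}(t)$ set equal to $\Delta_{T_{r,s}}(t)=(t^{rs}-1)(t-1)/((t^r-1)(t^s-1))$, should force $K_0$ to be the unknot and $\{m,n\}=\{r,s\}$, whence $K=T_{r,s}$, contradicting that $K$ is a satellite.

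The main obstacle is this last polynomial rigidity step: verifying that the cyclotomic factorization of $\Delta_{T_{r,s}}(t)$ admits no nontrivial decomposition as a product $\Delta_{K_0}(t^n)\Delta_{T_{m,n}}(t)$ consistent with the cable genus equation. This is a purely algebraic check, but a potentially delicate one, involving the primitive $rs$-th roots of unity appearing in $\Delta_{T_{r,s}}$ and the additional $L$-space-knot constraint $m/n\geq 2g(K_0)-1$ which limits how small the invariants of $K_0$ can be.
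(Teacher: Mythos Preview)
Your approach diverges from the paper's at a critical point and contains a genuine gap. After establishing (via Gabai) that $K$ is a $0$- or $1$-bridge braid in $V$, you assert that ``results of Hedden and Hom'' force $K$ to be a \emph{cable} $C_{m,n}(K_0)$. That is not what those results say: $0$-bridge braids in a solid torus are cables, but $1$-bridge braids (Berge--Gabai knots) generally are not, and the Hedden/Hom theorems characterize when a given cable is an $L$-space knot --- they do not assert that an $L$-space satellite with braided pattern must be a cable. Indeed, Berge--Gabai satellites of $L$-space knots can themselves be $L$-space knots without being cables. So your reduction to the cable case is unjustified. Even granting it, you correctly flag that the cyclotomic-factorization rigidity step is incomplete, and it is not obvious how to carry it out cleanly.

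The paper sidesteps both issues. One only needs that $K$ is a braid of winding number $w>1$ in $V$ and that $V_{p/q}(K)$ is a solid torus; this already gives $S^3_{p/q}(K)=S^3_{p/(qw^2)}(K')$ for the companion core $K'$, with no need to identify the pattern further. Choosing the essential torus innermost makes $K'$ either hyperbolic or a torus knot, and Proposition~\ref{prop:hyperbolicbound} (applied with $p\geq 10.75(2g(K)-1)$ and $g(K)=g(T_{r,s})$ from Lemma~\ref{lem:torusgenus}) eliminates the hyperbolic case. Thus $K'=T_{m,n}$, and one compares the exceptional-fiber multiplicities $\{r,s,p-qrs\}=\{m,n,|p-qw^2mn|\}$ of the common small Seifert fibered space. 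The satellite formula $\Delta_K(t)=\Delta_C(t)\,\Delta_{K'}(t^{w})$ is used only for the crude consequence $g(K')<g(T_{r,s})$; after matching one multiplicity (say $m=r$) this gives $n<s$, forcing $n=p-qrs$. An elementary estimate using $p/q\geq 10.75(rs-r-s)$ then yields $p-qrs\geq\max\{r,s\}>n$, a contradiction. The ingredient you are missing is the application of Proposition~\ref{prop:hyperbolicbound} to the companion, which replaces your delicate polynomial argument by a direct comparison of Seifert invariants.
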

\begin{proof}
If $K$ is a satellite knot, then let $R\subset S^3 \setminus K$ be an incompressible torus. This bounds a solid torus $V\subseteq S^3$ which contains $K$. Let $K'$ be the core of the solid torus $V$. By choosing $R$ to be ``innermost'', we may assume that $K'$ is not a satellite. This means that $K'$ is either a torus knot or it is hyperbolic \cite{Thurston1982KleinianGroups}. Since $S^3_{p/q}(T_{r,s})$ contains no incompressible tori and is irreducible, it follows from the work of Gabai that $V_{p/q}(K)$ is again a solid torus and $K$ is either a 1-bridge knot or a torus knot in $V$ \cite{Gabai1989solidtori}. In either case, this is a braid in $V$ and we have
$S_{p/q}^3(K)\cong S_{p/q'}^3(K')$ where $q'=qw^2$ and $w>1$ is the winding number of $K$ in $V$.

Since $p\geq 10.75(2g(K)-1)$, Proposition~\ref{prop:hyperbolicbound} shows that $K'$ cannot be hyperbolic. Thus we may assume that $K'$ is a torus knot, say $K'=T_{m,n}$. Since $S^3_{p/q}(T_{r,s})$ is an $L$-space and $p/q'>0$ we have $m,n>1$. The manifold $S_{p/q}^3(T_{r,s})\cong S_{p/q'}^3(T_{m,n})$ is Seifert fibred over $S^2$ with exceptional fibres of order $\{r,s,p-qrs\}= \{m,n, |p-q'mn|\}$. Hence we can assume $m=r$. By Lemma~\ref{lem:torusgenus}, we have $\Delta_{T_{r,s}}(t)=\Delta_K(t)$. However, since $K$ is a satellite, its Alexander polynomial takes the form $\Delta_K(t)=\Delta_C(t)\Delta_{K'}(t^w)$, where $C$ is the companion knot of $K$. In particular, we have $g(K')<g(T_{r,s})$ and, consequently, $n<s$. Comparing the orders of the exceptional fibres again, this implies that $n=p-qrs$. However, we have
\begin{align*}
p-rsq&\geq 9.75q(rs-r-s)-q(r+s)\\
     &\geq 9.75(\max\{r,s\}-2) -(2\max\{r,s\}-1)\\
     &= 7.75\max\{r,s\} -18.5\\
     &\geq \max\{r,s\},
\end{align*}
where the last inequality holds because we have $\max\{r,s\}\geq 3$. This is a contradiction and shows that $K'$ cannot be a torus knot. Thus we see that $K$ cannot be a satellite knot.
\end{proof}

\begin{proof}[Proof of Theorem~\ref{thm:charslopes}]
Suppose that $K$ is a knot in $S^3$ with $Y\cong S^3_{p/q}(K) \cong S^3_{p/q}(T_{r,s})$ for $p/q\geq 10.75(rs-r-s)$. Lemma~\ref{lem:torusgenus} shows that $g(K)=g(T_{r,s})$ and $\Delta_K(t)=\Delta_{T_{r,s}}(t)$. Since $Y$ is not hyperbolic, Proposition~\ref{prop:hyperbolicbound} shows that $K$ is not a hyperbolic knot. Lemma~\ref{lem:satellitebound} shows that $K$ is not a satellite knot. Therefore, it follows that $K$ is a torus knot. Since two distinct torus knots have the same Alexander polynomial only if they are mirrors of one another, $K$ is either $T_{r,s}$ or $T_{-r,s}$. As $K$ admits positive $L$-space surgeries, it follows that $K=T_{r,s}$, as required.
\end{proof}

\bibliographystyle{alpha}
\bibliography{master}
\end{document}